\def\eq{equation}
\def\tk{\widetilde{K(r)}}
\def\ep{\varepsilon}
\def\S*{\Sigma_*}
\def\r{\rho}
\def\<{\langle}
\def\>{\rangle}
\def\N{\mathbb{N}}
\def\R{\mathbb{R}}
\def\rd{\mathbb{R}^d}
\def\H{\mathcal{H}}
\def\S{\mathcal{S}}
\def\1{\mathbf{1}}
\newcommand{\const}{\operatorname{const}}
\newcommand{\diam}{\operatorname{diam}}
\newcommand{\esup}{\operatorname{ess\,sup}}
\newcommand{\nor}{\operatorname{nor}}
\newcommand{\Int}{\operatorname{int}}
\newcommand{\rea}{\operatorname{reach}}
\newcommand{\Frac}{{\rm frac}}
\newcommand{\var}{{\rm var}}
\theoremstyle{plain}
   \newtheorem{thm}{Theorem}[section]
   \newtheorem{thms}{Theorem}[subsection]
   \newtheorem{lems}[thms]{Lemma}
   \newtheorem{lem}[thm]{Lemma}
\theoremstyle{remark}
\theoremstyle{definition}
   \newtheorem{rems}[thms]{Remark}
   \newtheorem{expls}[thms]{Examples}
   \newtheorem{examp}[thm]{Example}
\begin{document}
\title{Curvature densities of self-similar sets}
\author{J. Rataj\thanks{Supported by grants MSM 0021620839 and GA\v CR 201/10/J039.} \hskip 5pt and M. Z\"ahle \thanks{Supported by grant DFG ZA 242/5-1.} \\[5mm]
      (Charles University Prague and Friedrich Schiller University Jena,\\[1mm] 
       e-mail: rataj@karlin.mff.cuni.cz, martina.zaehle@uni-jena.de)}
\date{}

\maketitle
\begin{abstract}

For a large class of self-similar sets $F$ in $\rd$, analogues of the higher order mean curvatures of differentiable submanifolds are introduced, in particular, the fractal Gauss-type curvature. They are shown to be the densities of associated fractal curvature measures, which are all multiples of the corresponding Hausdorff measures on $F$, due to its self-similarity. This local approach based on ergodic theory for an associated dynamical system enables us to extend former global curvature results.
\end{abstract}
\section{Introduction}
In recent years first attempts to investigate a second order fractal 'differential' geometry have been made by Winter and the second author, see \cite{Wi06}, \cite{Za10}, \cite{WiZa}, \cite{Wi10}. The main idea was to approximate fractal sets in $\rd$ by small neighborhoods and to use known results from singular curvature theory in convex geometry and, more generally, geometric measure theory for these neighborhoods provided they have the desired structure. It turned out that this is the case for many self-similar sets satisfying the open set condition. In order to obtain limit results for the appropriately rescaled global curvatures the renewal theorem from probability theory and asymptotic analysis has been used. (For the special case of the Minkowski content in $\R^1$ this goes back to Lapidus \& Pomerance \cite{LP93} and Falconer \cite{Fa97} and in $\rd$ to Gatzouras \cite{Ga00}.) Moreover, weak limits of the corresponding curvature measures have been obtained as a consequence taking into regard the self-similarity property and Prohorov's theorem on weak compactness of tight families of measures.

In the present paper we suggest another approach.
We start with a result concerning the existence of local fractal curvatures at almost all points of the self-similar set $F$ (Section 2). (The differential geometric analogue are the symmetric functions of principal curvatures of smooth submanifolds.) For this we mainly use the scaling properties of the curvature measures and Birkhoff's ergodic theorem for an associated dynamical system. The positive reach assumption on the closure of the complement of the parallel sets of $F$ for almost all distances is the same as in the former papers, but the integrability condition is essentially weakened. However, here we get only convergence results in the sense of average limits.

By the choice of an appropriate net of locally homogeneous neighborhoods $A_F(x,\ep),\, x\in F,\, \ep<\ep_0$, for the construction of these local curvatures, we can easily derive the existence of related global fractal curvatures which simplifies the proofs and extends the corresponding results from \cite{Wi06}, \cite{Za10} and \cite{WiZa}. The weak convergence of the associated curvature measures then follows as in \cite{Wi06} and \cite{WiZa}. Moreover, the local curvatures can now be interpreted as densities of the fractal limit measures with respect to $D$-dimensional Hausdorff measure on $F$, where $D$ equals the Hausdorff dimension. (See Section 3.)

Finally, in Section 4 we study the examples of the Cantor dust in the plane and the Menger sponge in space which demonstrate some typical phenomena. They do not satisfy the assumptions from the former papers: in particular, the Euler number of the parallel sets $F(\ep)$ of $F$ with distance $\ep$ is unbounded at neighborhoods of certain critical values of $\ep$. Nevertheless, they fit into the approach of the present paper. (See also \cite{Wi10} for further conditions and examples.)

\section{Basic notions}
\subsection{Self-similar sets}
The notion of self-similar sets is well-known from the literature (see Hutchinson \cite{Hu81} for the first general approach and the relationships mentioned below without a reference). We use here the following notations and results.\\
The basic space is a compact set $J\subset\rd$ with $J=\overline{\Int J}$. $S_1,\ldots,S_N$ denotes the generating set of {\it contracting similarities} in $\rd$ with {\it contraction ratios} $r_1,\ldots,r_N$. We assume the {\it strong open set condition} (briefly (SOSC)) with respect to $\Int J$, i.e.,
$$\bigcup_{j=1}^N S_j(J)\subset J\, ,~~  S_j(\Int J)\cap S_l(\Int J)=\emptyset\, ,~j\neq l,$$
 and that there exists a sequence of indices $l_1,l_2,\ldots ,l_m\in \{1,\ldots,N\}$ such that
 $$S_{l_1}\circ S_{l_2}\ldots\circ S_{l_m}(J)\cap \Int J\neq\emptyset\, .$$
 The latter (strong) condition is here equivalent to
 $$F\cap\Int J\ne\emptyset$$
 where $F$ denotes the associated self-similar fractal set $F$. (According to a result of Schief \cite{Sch} (SOSC) for some $J$ is already implied by the open set condition on the similarities. In view of this paper a characterization of (SOSC) in algebraic terms of the $S_i$ is given in Bandt and Graf \cite{BG92}.) The set $F$ may be constructed by means of the {\it code space} $W:=\{1,\ldots,N\}^\N$, the set of all infinite words over the alphabet $\{1,\ldots,N\}$. We write $W_n:=\{1,\ldots,N\}^n$ for the set of all words of length $|w|=n$, $W_*:=\bigcup_{n=1}^\infty W_n$ for the set of all finite words,
  $w|n:=w_1w_2\ldots w_n$ if $w=w_1w_2\ldots w_n,w_{n+1}\ldots$ for the {\it restriction} of a (finite or infinite) word to the first $n$ components, and  $vw$ for the {\it concatenation} of a finite word $v$ and a word $w$. If $w=w_1\ldots w_n\in W_n$ we also use the abbreviations
 $S_w:=S_{w_1}\circ\ldots\circ S_{w_n}$ and $r_w:=r_{w_1}\ldots r_{w_n}$ for the contraction ratio of this mapping. Finally we denote $K_w:=S_w(K)$ for any compact set $K$ and $w\in W_*$. (For completeness we also write $K_\emptyset:=K$.) In these terms the set $F$ is determined by
 $$F=\bigcap_{n=1}^\infty \bigcup_{w\in W_n}J_w$$
 and characterized by the {\it self-similarity property}
 $F=S_1(F)\cup\ldots\cup S_N(F)$.
 Iterated applications yield
$$
F=\bigcup_{w\in W_n}F_w,\quad n\in\N .
$$
As in the literature, we will use the abbreviation
$$S(K):=\bigcup_{j=1}^N S_j(K)$$
for compact sets $K$, i.e., $F=S^n(F)$, $n\in\N$.\\
Alternatively, the self-similar fractal $F$ is the image of the code space $W$ under the {\it projection} $\pi$ given by
$$
\pi(w):=\lim_{n\rightarrow\infty}S_{w|n}x_0
$$
for an arbitrary starting point $x_0$.
The mapping $w\mapsto x=\pi(w)$ is biunique except for a set of points $x$ of $D$-dimensional Hausdorff measure $\H^D$ zero, and the {\it Hausdorff dimension} $D$ of $F$ is determined by
\begin{equation}\label{dim}
\sum_{j=1}^N r_j^D=1\, .
\end{equation}
Up to exceptional points we {\it identify} $x\in F$ with its {\it coding sequence} and write
$x_1x_2\ldots$ for this infinite word, i.e. $\pi(x_1x_2\ldots)=x$, and write $$x|n:=x_1\ldots x_n$$ for the corresponding finite words.\\
If $\nu$ denotes the infinite product measure on $W$ determined by the probability measure on the alphabet $\{1,\ldots,N\}$ with single probabilities
$r_1^D,\ldots,r_N^D$, then the normalized $D$-dimensional Hausdorff measure with support $F$ equals
\begin{equation}\label{bernoulli-meas}
\mu:=\H^D(F)^{-1}\H^D(F\cap(\cdot))=\nu\circ \pi^{-1}\,.
\end{equation}
It is also called the natural {\it self-similar measure} on $F$, since we have
\begin{equation}  \label{ssmu}
\mu=\sum_{j=1}^N r_j^D \mu\circ S_j^{-1}\, .
\end{equation}
Furthermore, by the open set condition $F$ is a $D$-{\it set}, i.e., there exist positive constants $c_F$ and $C_F$ such that
\begin{equation}\label{D-set}
c_F\, r^D\le \H^D(F\cap B(x,r))\le C_F\, r^D\, ,~~x\in F ,~r\le \diam F\, .
\end{equation}
From (SOSC) on $J$ one obtains
\begin{equation}\label{intlogdist}
\int|\ln d(y,J^c)|\mu(dy)<\infty
\end{equation}
(see Graf \cite[proof of Proposition 3.4]{Gr95}). This implies, in particular, that
\begin{equation}\label{vanishing-boundary}
\mu(\partial J)=0
\end{equation}
which can also be seen by other methods.

\subsection{Curvature measures of parallel sets}\label{sec:classcurv}
We will use the following notations for points $x$ and subsets $E$ of $\rd$:
$$d(x,E):=\inf_{y\in E}|x-y|~,~~ |E|:=\diam E=\sup_{x,y\in E}|x-y|\, .$$
The background from classical singular curvature theory is summarized in \cite{Za10}. We recall some of those facts.
For certain classes of compact sets $K\subset\rd$ (including many classical geometric sets) it turns out that for Lebesgue-almost all distances $r>0$ the parallel set
\begin{\eq}\label{parallelset}
K(r):=\{x\in\rd: d(x,K)\le r\}
\end{\eq}
possesses the property that the closure of its complement
\begin{\eq}\label{closcompl}
\widetilde{K(r)}:= \overline{K(r)^c}
\end{\eq}
is a set of positive reach in the sense of Federer \cite{Fe59} with Lipschitz boundary. A sufficient condition is that $r$ is a regular value of the Euclidean distance function to $K$ (see Fu \cite[Theorem 4.1]{Fu85} together with \cite[Proposition 3]{RZ03}). (Recall that in $\mathbb{R}^2$ and $\mathbb{R}^3$ this is fulfilled for all $K$ (see \cite{Fu85}). In this case both the sets $\widetilde{K(r)}$ and $K(r)$ are {\it Lipschitz d-manifolds of bounded curvature} in the sense of \cite{RZ05}, i.e., their {\it k-th Lipschitz-Killing curvature measures}, $k=0,1,\ldots,d-1$, are determined in this general context and agree with the classical versions in the special cases. Moreover, they satisfy
\begin{\eq}  \label{sign}
C_k(K(r),\cdot)=(-1)^{d-1-k}C_k\big(\tk,\cdot\big)\, .
\end{\eq}
Hence, the $C_k(K(r),\cdot)$ are signed measures with finite
{\it variation measures} $C_k^{\var}(K(r),\cdot)$ and the explicit integral representations are reduced to \cite{Za86b} (cf. \cite[Theorem 3]{RZ05} for the general case). In the present paper only the following main properties of the curvature measures for such parallel sets will be used:\\
$C_{d-1}(K(r),\cdot)$ agrees with one half of the $(d-1)$-{\it dimensional Hausdorff measure} $\H^{d-1}$ on the boundary $\partial K(r)$. Note that $\partial K(r)$ is $(d-1)$-rectifiable for any compact set $K$ and any $r>0$ (see \cite[Proposition~2.3]{RW10}), hence, we can always use the notation
$$C_{d-1}(K(\ep),\cdot):=\frac 12\H^{d-1}(K(\ep)\cap(\cdot)).$$
Furthermore, for completeness we define
$C_d(K(r),\cdot)$ as {\it Lebesgue measure restricted to} $K(r)$. The {\it total measures (curvatures)} of $K(r)$ are denoted by
\begin{\eq}
C_k(K(r)):=C_k(K(r),\rd)\, ,~~ k=0,\ldots ,d\, .
\end{\eq}
By an associated Gauss-Bonnet theorem (see \cite[Theorems 2,3]{RZ03}) the {\it total Gauss curvature}
$C_0(K(r))$ coincides with the {\it Euler-Poincaré characteristic} $\chi(K(r))$.\\
The curvature measures are {\it motion invariant}, i.e.,
\begin{\eq}
C_k(g(K(r)),g(\cdot))=C_k(K(r),\cdot)~~\mbox{for any Euclidean motion}~g\, ,
\end{\eq}
they are {\it homogeneous of degree} $k$, i.e.,
\begin{\eq}\label{scaling}
C_k(\lambda K(r),\lambda (\cdot))=\lambda^k\, C_k(K(r),\cdot)\, ,~~\lambda>0\, ,
\end{\eq}
and locally determined, i.e.,
\begin{\eq}\label{loc-curv}
C_k(K(r),(\cdot)\cap G)=C_k(K'(r'),(\cdot)\cap G)
\end{\eq}
for any open set $G\subset\rd$ such that $K(r)\cap G=K'(r')\cap G$, where $K(r)$ and $K'(r')$ are
both parallel sets where the closures of the complements have positive reach.

We shall need the following property of the surface area of parallel sets:
\begin{equation}  \label{sa}
\H^{d-1}(K(r))\leq \frac dr \H^d(K(r)),\quad r>0,
\end{equation}
which follows from the ``Kneser property'' of the volume function, $\frac{{\rm d}}{{\rm d}r}\H^d(K(r))\leq\frac dr\H^d(K(r))$, see e.g.\ \cite{RSS}, Lemma~4.6 and its proof, and from the fact that $\frac{{\rm d}}{{\rm d}r}\H^d(K(r))=\H^{d-1}(K(r))$ up to countably many $r>0$, see \cite{RW10} for more details.

\section{Local curvatures of self-similar sets}
\label{sec:localcurv}
\subsection{Local neighborhood nets}\label{ssec:loc-neighb-nets}
Throughout the paper we will assume the {\it neighborhood regularity} of the self-similar set $F$:
\begin{equation}\label{neighborregularity}
\rea \widetilde{F(\ep)} >0~ \mbox{and}~\partial{F(\ep)}~\mbox{is a Lipschitz manifold for Lebesgue almost all}~\ep>0\, .
\end{equation}
Recall from the previous section that for space dimensions $d\le 3$ this is always fulfilled. It is not difficult to see that it remains true for arbitrary $d$ if the parallel sets $F(\ep)$ are polyconvex.\\
Under the regularity condition for such an $\ep$ the curvature measures $C_k(F(\ep),\cdot)$ are defined. In order to determine some local limits as $\ep\rightarrow 0$ we consider the following notion.

Let constants $a>1$ and $\ep_0>0$ be given and denote $b:=\max\big(2a,\ep_0^{-1}|J|\big)$.
A {\it locally homogeneous neighborhood net in $F$} is a family of sets
$$\{A_F(x,\ep):\, x\in F,\, 0<\ep<\ep_0\}$$
satisfying the following two conditions:
\begin{equation}
A_F(x,\ep)\subset\partial F(\ep)\cap B(x,a\ep),
\end{equation}
\begin{equation}\label{loc-hom-neighbor}
A_F(x,\ep)=S_j\big(A_F(S_j^{-1}x,r_j^{-1}\ep)\big)~~ \mbox{if}~~1\le j\le N,\, x\in F_j~~ \mbox{and}~~ \ep<b^{-1}d(x,(S_jJ)^c)\, .
\end{equation}
(Note that the last inequality implies $r_j^{-1}\ep<\ep_0$.)

\begin{expls}\hfil
\begin{eqnarray}  \label{exp1}
A_F(x,\ep)&:=&\partial F(\ep)\cap B(x,a\ep),\quad \ep>0, \\
\label{exp2}
A_F(x,\ep)&:=&\{z\in \partial F(\ep):\{y\in F:|y-z|=\ep\}\subset B(x,\ep)\}, \quad \ep>0,\\
&&\text{(the set of those points from $\partial F(\ep)$ which have their foot points on $F$} \nonumber\\
&&\text{within the ball $B(x,\ep)$),}\nonumber\\
\label{expl3}
A_F(x,\ep)&:=&\{z\in \partial F(\ep):|x-z|\le\r_F(z,\ep)\}, \quad 0<\ep<\ep_0:=\H^D(F)^{1/D},
\end{eqnarray}
where $\r_F(z,\ep)$ is for $0<\ep<\ep_0$ determined by the condition
$$\rho_F(z,\ep)=\min\{\rho:\, \H^D(F\cap B(z,\rho))=\ep^D\}.$$
\end{expls}

The required properties of the set families \eqref{exp1} and \eqref{exp2} can easily be verified. We shall show that the same is true for \eqref{expl3}. The importance of Example \eqref{expl3} will be clear in Section 3. \\

\begin{lems}
For any $z\in \R^d$, the function $\ep\mapsto\rho_F(z,\ep)$ is well defined and increasing on $(0,\H^D(F)^{1/D})$. The sets \eqref{expl3} define a locally homogeneous neighborhood net in $F$ with parameters $\ep_0=\H^D(F)^{1/D}$ and $a=2c_F^{-1/D}$, where $c_F\leq 1$ is a constant from \eqref{D-set}.
\end{lems}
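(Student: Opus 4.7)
My plan has three parts, all centred on the function $f_z(\rho):=\H^D(F\cap B(z,\rho))$. For well-definedness and monotonicity of $\rho_F(z,\cdot)$, I would observe that $f_z$ is nondecreasing in $\rho$, vanishes at $\rho=0$, tends to $\H^D(F)$ as $\rho\to\infty$, and (using closed balls) is right-continuous. Hence for every $\ep^D\in(0,\H^D(F))$ the set $\{\rho:f_z(\rho)\geq\ep^D\}$ admits a minimum, giving a well-defined $\rho_F(z,\ep)$; enlarging $\ep$ shrinks this set, so $\rho_F(z,\cdot)$ is nondecreasing.

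Second, I would bound $\rho_F(z,\ep)\leq a\ep$ for $z\in\partial F(\ep)$. Picking a foot point $y\in F$ with $|y-z|=\ep$, the inclusion $B(y,\rho-\ep)\subseteq B(z,\rho)$ and the $D$-set lower bound in \eqref{D-set} yield $f_z(\rho)\geq c_F(\rho-\ep)^D$. Requiring this to exceed $\ep^D$ forces $\rho\geq(1+c_F^{-1/D})\ep\leq 2c_F^{-1/D}\ep=a\ep$, using $c_F\leq 1$. The containment $A_F(x,\ep)\subset\partial F(\ep)\cap B(x,a\ep)$ then follows directly from the definition.

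For the locally homogeneous identity \eqref{loc-hom-neighbor}, fix $x\in F_j$ with $\ep<b^{-1}d(x,(S_jJ)^c)$ and $z\in A_F(x,\ep)$. Since $|x-z|\leq a\ep$ and $\rho_F(z,\ep)\leq a\ep$, the assumption $b\geq 2a$ gives
\[
B(z,\rho_F(z,\ep))\subseteq B(x,2a\ep)\subseteq B(x,b\ep)\subseteq\Int S_j(J).
\]
By OSC and \eqref{vanishing-boundary}, $F\cap\Int S_j(J)$ agrees with $F_j\cap\Int S_j(J)$ up to an $\H^D$-null subset of $\bigcup_{l\neq j}S_l(\partial J)$, so
\[
\H^D\bigl(F\cap B(z,\rho)\bigr)=\H^D\bigl(F_j\cap B(z,\rho)\bigr)=r_j^D\,\H^D\bigl(F\cap B(S_j^{-1}z,r_j^{-1}\rho)\bigr)
\]
for every $\rho\leq a\ep$. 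This gives the central scaling $\rho_F(z,\ep)=r_j\rho_F(S_j^{-1}z,r_j^{-1}\ep)$, and combined with $|x-z|=r_j|S_j^{-1}x-S_j^{-1}z|$ it converts the defining inequality of $A_F(x,\ep)$ into that of $A_F(S_j^{-1}x,r_j^{-1}\ep)$.

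The main technical obstacle is the companion distance identity $d(z,F)=d(z,F_j)$ needed to make the $\partial F(\ep)$ condition transform correctly. The ball inclusion above confines all foot points of $z$ to $\Int S_j(J)$, but OSC alone does not rule out a foot point lying in $F_l\cap S_l(\partial J)\cap\Int S_j(J)$ for some $l\neq j$. I would circumvent this by observing that such a point is an $F$-overlap point admitting a second coding which places it in $F_j$, thereby restoring $d(z,F)=d(z,F_j)$; failing that, the identity holds for $\H^D$-a.e.\ $z$, which will suffice for the ergodic arguments on $(F,\mu)$ that \eqref{loc-hom-neighbor} is used to support.
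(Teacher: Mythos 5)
Your outline reproduces the paper's strategy for the middle step (the $D$-set lower bound giving $\rho_F(z,\ep)\le a\ep$ and hence $A_F(x,\ep)\subset\partial F(\ep)\cap B(x,a\ep)$), but there are two genuine gaps. The first concerns well-definedness. The paper defines $\rho_F(z,\ep)=\min\{\rho:\H^D(F\cap B(z,\rho))=\ep^D\}$ with \emph{equality}; you silently replace $=$ by $\ge$, which is what right-continuity of $f_z(\rho):=\H^D(F\cap B(z,\rho))$ delivers, but which is a different quantity whenever $f_z$ jumps over the level $\ep^D$. The exact equality $\H^D(F\cap B(z,\rho_F(z,\ep)))=\ep^D$ is indispensable later: it is precisely what produces the identity \eqref{int} in the proof of Theorem~\ref{globcurv}. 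So one really must show that $f_z$ is continuous, i.e.\ that $\H^D(F\cap\partial B(z,\rho))=0$ for every sphere $\partial B(z,\rho)$. This is exactly the content of Lemma~\ref{spheres}, which the paper proves by a nontrivial induction over the dimension of the sphere using the self-similarity \eqref{ssmu}, (SOSC) and \eqref{vanishing-boundary}; nothing in your proposal substitutes for that step.

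The second gap is the foot-point problem you raise yourself at the end, and neither of your escapes works: a point of $F_l\cap\Int S_j(J)$ with $l\ne j$ would have no reason to admit a coding beginning with $j$, and an ``a.e.\ $z$'' version of \eqref{loc-hom-neighbor} is weaker than the exact set identity that the definition of a locally homogeneous neighborhood net (and its later use via the locality of $C_k$) requires. The resolution is elementary and exact: $F_l\subset S_l(J)=\overline{S_l(\Int J)}$ because $J=\overline{\Int J}$, and since $S_l(\Int J)$ is disjoint from the open set $S_j(\Int J)$ by the open set condition, so is its closure; hence $F_l\cap\Int S_j(J)=\emptyset$ for $l\ne j$ and $F\cap\Int S_j(J)=F_j\cap\Int S_j(J)$ with no exceptional set. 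Since every foot point of a $z\in B(x,a\ep)$ lies in $B(x,2a\ep)\subset\Int S_j(J)$, this yields both $d(z,F)=d(z,F_j)$ (so that $z\in\partial F(\ep)$ if and only if $S_j^{-1}z\in\partial F(r_j^{-1}\ep)$) and $\H^D(F\cap B(z,\rho))=r_j^D\,\H^D(F\cap B(S_j^{-1}z,r_j^{-1}\rho))$ exactly, after which your scaling computation does close the argument.
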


\begin{proof}
The function $\rho\mapsto\H^D(F\cap B(z,\rho))$ is increasing and takes values between $0$ and $\H^D(F)$. Since it is, moreover, continuous by the Lemma~\ref{spheres}, we can determine $\rho_F(z,\ep)$ from the appropriate level set. Thus, $A_F(z,\ep)$ is well defined for any $x\in F$ and $0<\ep<\ep_0$.

We further infer that for $x\in F$ and $\ep<\ep_0$,
\begin{equation}\label{ball-property}
\rho_F(x,\ep)\leq a\ep\quad\text{and}\quad A_F(x,\ep)\subset \partial F(\ep)\cap B(x,a\ep).
\end{equation}
To see this, note that for any $z\in\partial F(\ep)$ and $y\in F$ such that $|y-z|=\ep$ we have $B(y,\ep)\subset B(z,2\ep)$ and thus the left inequality in \eqref{D-set} applied to $B(y,\ep)$ implies $\H^D(F\cap B(z,2\ep))\ge c_F\ep^D$ and hence, $\r_F(z,c_F^{1/D}\ep)\le 2\ep$, which implies \eqref{ball-property}.

Finally, we shall verify that for $a=2c_F^{-1/D}$ the sets $A_F(x,\ep)$ satisfy condition \eqref{loc-hom-neighbor}. Let $j\in \N$ and $x\in F_j$ be given. Note that if $\ep <b^{-1}d(x,(S_jJ)^c)$ then $B(x,2a\ep)\subset\Int S_jJ$. Then, for any $z\in B(S_j^{-1}x,r_j^{-1}a\ep)$, we have $\rho_F(z,r_j^{-1}\ep)=r_j^{-1}\rho_F(S_jz,\ep)$,
and
$z\in\partial F(r_j^{-1}\ep)$ if and only if $S_jz\in\partial F(\ep)$.
Thus,
\begin{eqnarray*}
A_F(S_j^{-1}x,r_j^{-1}\ep)&=&\{z\in \partial F(r_j^{-1}\ep):|S_j^{-1}x-z|\le\r_F(z,r_j^{-1}\ep)\}\\
&=&\{ z:\, S_jz\in\partial F(\ep), |x-S_jz|\leq\rho_F(S_jz,\ep)\},
\end{eqnarray*}
which implies that $S_j(A_F(S_j^{-1}x,r_j^{-1}\ep))=A_F(x,\ep)$, as required.
\end{proof}

\begin{lems}  \label{spheres}
For any sphere $V$ in $\R^d$ of dimension $k\leq d-1$ we have $\mu(V)=0$.
\end{lems}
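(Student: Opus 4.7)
Any $k$-sphere with $k\le d-1$ is contained in the $(d-1)$-sphere $\partial B(z,\rho)\subset\R^d$ with the same centre and radius, so by monotonicity of $\mu$ it suffices to prove $\mu(\partial B(z,\rho))=0$ for every $z\in\R^d$ and $\rho>0$. When $D>d-1$ this is immediate: $\partial B(z,\rho)$ is $(d-1)$-rectifiable and carries finite $\H^{d-1}$-measure, whence $\H^D(\partial B(z,\rho))=0$, and by \eqref{bernoulli-meas} $\mu(\partial B(z,\rho))=\H^D(F)^{-1}\H^D(F\cap\partial B(z,\rho))=0$.

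Assume now $D\le d-1$ and argue by contradiction: suppose $a:=\mu(\partial B(z_0,\rho_0))>0$. Iterating \eqref{ssmu} to level $n$ gives
$$a=\sum_{w\in W_n}r_w^D\,\mu\bigl(\partial B(S_w^{-1}z_0,r_w^{-1}\rho_0)\bigr),$$
so there is $w_n\in W_n$ with $\mu(\partial B(S_{w_n}^{-1}z_0,r_{w_n}^{-1}\rho_0))\ge a$. Because $r_{w_n}^{-1}\rho_0\to\infty$ while $F$ is compact, the portion of each such sphere meeting a bounded neighbourhood of $F$ lies in an arbitrarily thin slab around an affine hyperplane $H_n$. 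Passing to a subsequence (the unit normals to $H_n$ lie in the compact $S^{d-1}$ and the signed offsets in a bounded interval determined by $\diam F$) and using $\mu(H^{\ep})\downarrow\mu(H)$ as $\ep\downarrow 0$, I obtain an affine hyperplane $H\subset\R^d$ with $\mu(H)\ge a>0$.

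It therefore remains to show $\mu(H)=0$ for every affine hyperplane. Writing $H=p_n^{-1}(c)$ with $p_n(x)=x\cdot n$ and $S_j(x)=r_jR_jx+b_j$, the projections $\nu_n:=(p_n)_*\mu$ satisfy the recursion
$$\nu_n(\{c\})=\sum_{j=1}^N r_j^D\,\nu_{R_j^Tn}\!\bigl(\bigl\{(c-b_j\cdot n)/r_j\bigr\}\bigr).$$
Since $F\subset B(0,R)$ each $\nu_n$ is supported in $[-R,R]$, and since $\mu$ itself has no point atoms (the self-similar identity $\mu(\{p\})=r_{p|k}^D\,\mu(\{\sigma^k p\})$ and $\mu\le 1$ force $\mu(\{p\})\le(\max_j r_j)^{Dk}\to 0$), iterating the above recursion jointly over the compact parameter space $S^{d-1}\times[-R,R]$ forces $\sup_{n,c}\nu_n(\{c\})=0$, contradicting $\mu(H)\ge a$.

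The principal difficulty lies in this last paragraph: unlike the point-atom case, the self-similar recursion for $\nu_n$ couples the atom location $c$ with the orientation $n$ through the rotations $R_j^T$, so the iteration must be run jointly over orientations and atom positions, exploiting the uniform compactness of the supports of the $\nu_n$ together with the atomlessness of $\mu$ itself.
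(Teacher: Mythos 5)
Your reductions at the start are sound: passing from a $k$-sphere to the ambient $(d-1)$-sphere, disposing of the case $D>d-1$ via $\H^{d-1}$-finiteness, locating a word $w_n$ with $\mu(S_{w_n}^{-1}V)\ge a$ from the iterated identity \eqref{ssmu}, and blowing up the spheres to extract a hyperplane $H$ with $\mu(H)\ge a$ (the last step needs the upper semicontinuity $\limsup_n\mu(H_n^{\ep_n})\le\mu(H^\delta)$ for each $\delta>0$ together with $\mu(H^\delta)\downarrow\mu(H)$, but that is routine). The genuine gap is the final paragraph, which is exactly where the proof has to happen and where you yourself concede ``the principal difficulty lies.'' The recursion $\nu_n(\{c\})=\sum_j r_j^D\,\nu_{R_j^Tn}(\{(c-b_j\cdot n)/r_j\})$ cannot by itself force $M:=\sup_{n,c}\nu_n(\{c\})=0$: since $\sum_j r_j^D=1$, iterating it only yields $M\le M$, and the recursion is perfectly consistent with any positive value of $M$ (compactness of $S^{d-1}\times[-R,R]$ adds nothing here). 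To extract a contradiction you would need at least the following, none of which appears in the proposal: (i) upper semicontinuity of $H\mapsto\mu(H)$ on the compact space of hyperplanes meeting $\overline{B(0,R)}$, so that the supremum is attained at some $H^*$ and the equality case of the recursion shows every $S_w^{-1}H^*$ is again a maximizer; (ii) the fact that the hyperplanes $S_w^{-1}H^*$ are pairwise distinct for distinct words $w$ -- this is precisely where (SOSC) and \eqref{vanishing-boundary} must be invoked, as in the paper's proof; and (iii) the fact that the intersection of two distinct maximizers, an affine flat of dimension $\le d-2$, is $\mu$-null, which requires an induction on the dimension of affine subspaces that you never set up (your nonatomicity argument, which would serve as its base case, is also flawed: the identity $\mu(\{p\})=r_{p|k}^D\mu(\{\sigma^kp\})$ fails for points with several codings; use the $D$-set estimate \eqref{D-set} instead). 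With (i)--(iii) one gets at most $1/M$ maximizers versus infinitely many distinct ones, a contradiction -- but that is essentially the paper's counting argument transplanted to flats, not a consequence of your recursion.

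For comparison, the paper avoids the whole detour through hyperplanes by inducting directly on the dimension $k$ of the sphere: if $\mu(V)>0$, the images $S_w(V)$, $|w|\le n$, are pairwise distinct spheres (by (SOSC)), their pairwise intersections are spheres of lower dimension and hence $\mu$-null by the induction hypothesis, and additivity gives $1=\mu(F)\ge\sum_{|w|\le n}r_w^D\mu(V)=(n+1)\mu(V)$, a contradiction for large $n$. Your blow-up idea is attractive and could be completed, but only by importing that same distinctness-plus-counting machinery for affine flats; as written, the decisive step is asserted rather than proved.
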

\begin{proof}
We shall proceed by induction on $k$. For $k=0$ the assertion is true since $\mu$ is nonatomic. Take a $0<k_0\leq d-1$ and assume that the assertion is true for $k=0,\ldots,k_0-1$. Assume, for the contrary, that $\mu(V)>0$ for some $k_0$-sphere $V$.

Consider now the images $S_iV$, $i=1,\ldots ,N$. These are $k_0$-spheres of lower radii and no two of them coincide (this would contradict the SOSC since we assume $\mu(V)>0$). Consequently, for any $i\neq j$, the intersection $S_i(V)\cap S_j(V)$ is either empty or a sphere of dimension less than $k_0$ and has thus $\mu$-measure zero. It follows from this and from the self-similarity of $\mu$ \eqref{ssmu}
\begin{eqnarray*}
\mu(S_1(V)\cup\cdots\cup S_N(V))
&=&\mu(S_1(V))+\cdots +\mu(S_N(V))\\
&=&r_1^D\mu(V)+\cdots +r_N^D\mu(V)\\
&=&\mu(V).
\end{eqnarray*}
Since all $S_j(V)$ are spheres of radii less than that of $V$, the intersections $S_j(V)\cap V$ must be either empty or spheres of dimensions lower than $k_0$ and, consequently, have $\mu$-measure zero by the induction assumption. We conclude that
$$\mu(F)\ge\mu(V\cup S_1(V)\cup\cdots\cup S_N(V)) =
\mu(V)+\mu(S_1(V)\cup\cdots\cup S_N(V))=2\mu(V).$$
We can continue in the same way: for a natural number $n$, we consider all finite words $w$ of length $|w|\leq n$ and note that, due to (SOSC), for any two finite words $w,w'$, $S_w(V)$ and $S_{w'}(V)$ cannot coincide, unless $w=w'$. (Indeed, take $w\neq w'$ and let $i$ be the least index with $w_i\neq w'_i$. As the sphere $S_{w|i-1}(V)$ has positive $\mu$-measure, in view of \eqref{vanishing-boundary} it must intersect the interior of $J$ and, hence, its images $S_w(V)$ and $S_{w'}(V)$ intersect the disjoint domains $\Int S_{w|i}(J)$ and $\Int S_{w'|i}(J)$, respectively, and cannot coincide.)
Consequently, $\mu(S_w(V)\cap S_{w'}(V))=0$ if $w\neq w'$ by the induction assumption, as in the first part of the proof.
Using now the additivity of $\mu$, we get
$$
\mu(F)\geq\mu\big(\bigcup_{|w|\leq n}S_w(V)\big)
=\sum_{|w|\leq n}\mu(S_w(V))
=\sum_{|w|\leq n}r_w^D\mu(V)
=\big(\sum_{i=0}^n\sum_{|w|=i}r_w^D\big)\mu(V)
=(n+1)\mu(V)
$$
which contradicts $\mu(F)=1$ if $n$ is large enough.
\end{proof}
\subsection{Existence of local curvatures -- formulation of the main result}\label{ssec:exist-curv-dens}

We now can formulate the first {\it main result}.\\

\begin{thms}\label{curvdens}
Let $k\in\{0,1,\ldots,d\}$ and suppose that the self-similar set $F$ in $\rd$ with contraction ratios $r_1,\ldots,r_N$ and Hausdorff dimension $D$ satisfies the strong open set condition w.r.t. $\Int J$. If $k\le d-2$ we additionally suppose the neighborhood regularity \eqref{neighborregularity}. Let $\{A(x,\ep): x\in F,\, \ep<\ep_0\}$, be a locally homogeneous neighborhood net with constants $a>1$ and $\ep_0>0$, and  let $b=\max\big(2a,\ep_0^{-1}|J|)\big)$. Then for $\mathcal{H}^D$-a.a. $x\in F$ the following average limit exists
\begin{equation} \label{dens}
D_{C_k^{\Frac}|F}(x):=\lim_{\delta\rightarrow 0}\frac{1}{|\ln\delta|}\int_\delta^{b^{-1}d(x,J^c)} \ep^{-k}C_k\big(F(\ep), A_F(x,\ep)\big)\, \, \ep^{-1}d\ep
\end{equation}
and equals the constant
\begin{equation}\label{curvdens-int}
\mathcal{H}^D(F)^{-1}\big(\sum_{j=1}^N r_j^D|\ln r_j|\big)^{-1}\int_F\int_{b^{-1}d(y,(S_{y_1}J)^c)}^{b^{-1}d(y,J^c)}\ep^{-k}C_k\big(F(\ep), A_F(y,\ep)\big)\, \, \ep^{-1}d\ep\, \mathcal{H}^D(dy)
\end{equation}
provided the last integral converges absolutely if
$k\le d-2$, and for $k\in\{d-1,d\}$ this is always true.\\
\end{thms}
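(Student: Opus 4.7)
The plan is to recast the defining integral of $D_{C_k^{\Frac}|F}(x)$ as a Birkhoff time average for the shift on the code space, and let the ergodic theorem do the work. Set $f(y,\ep):=\ep^{-k}C_k(F(\ep),A_F(y,\ep))$, $\ep^{(0)}(y):=b^{-1}d(y,J^c)$, $\ep^{(1)}(y):=b^{-1}d(y,(S_{y_1}J)^c)$, and write $\sigma y:=S_{y_1}^{-1}y$ for the shift on $F$ induced by $(w_1w_2\dots)\mapsto(w_2w_3\dots)$. The key observation is the scaling invariance $f(y,\ep)=f(\sigma y,r_{y_1}^{-1}\ep)$, valid for $\ep<\ep^{(1)}(y)$: for such $\ep$ the choice of $b$ forces $B(y,(a+1)\ep)\subset\Int S_{y_1}J$, so by (SOSC) $F$ agrees with $S_{y_1}F$ in this ball; the local determination \eqref{loc-curv}, the local homogeneity \eqref{loc-hom-neighbor} of the net, motion invariance, and the degree-$k$ homogeneity \eqref{scaling} then combine into the asserted identity.

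Next I would unfold the integral. Splitting at $\ep^{(1)}(y)$ and substituting $\ep'=r_{y_1}^{-1}\ep$ in the lower part (noting $r_{y_1}^{-1}\ep^{(1)}(y)=\ep^{(0)}(\sigma y)$), induction gives
\begin{equation*}
\int_\delta^{\ep^{(0)}(x)}f(x,\ep)\,\frac{d\ep}{\ep}=\sum_{i=0}^{n-1}\Phi(\sigma^i x)+R_n(\delta,x),\qquad\Phi(y):=\int_{\ep^{(1)}(y)}^{\ep^{(0)}(y)}f(y,\ep)\,\frac{d\ep}{\ep},
\end{equation*}
with $R_n(\delta,x)=\int_{r_{x|n}^{-1}\delta}^{\ep^{(0)}(\sigma^n x)}f(\sigma^n x,\ep)\,\frac{d\ep}{\ep}$. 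Choose $n=n(\delta,x)$ so that $r_{x|n}^{-1}\delta\in[\ep^{(1)}(\sigma^n x),\ep^{(0)}(\sigma^n x))$; then $|R_n(\delta,x)|\le|\Phi(\sigma^n x)|$. The Bernoulli shift $(W,\sigma,\nu)$ with single probabilities $r_j^D$ is ergodic and, via the projection $\pi$, conjugate to $(F,\sigma,\mu)$, so Birkhoff's ergodic theorem applied to $\Phi$ (integrable by hypothesis for $k\le d-2$, and verified below for $k\in\{d-1,d\}$) and to $g(y):=|\ln r_{y_1}|$ yields, $\mu$-a.e.,
\begin{equation*}
\frac1n\sum_{i=0}^{n-1}\Phi(\sigma^i x)\longrightarrow\int\Phi\,d\mu,\qquad\frac1n\sum_{i=0}^{n-1}g(\sigma^i x)\longrightarrow\sum_j r_j^D|\ln r_j|=:\bar g.
\end{equation*}

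The numerator is now a Birkhoff average; the denominator $|\ln\delta|$ must be tied to $n$. From the stopping rule and $|\ln r_{x|n}|=\sum_{i=1}^n g(\sigma^{i-1}x)$ I get $|\ln\delta|=|\ln r_{x|n}|+O\bigl(|\ln\ep^{(0)}(\sigma^n x)|+|\ln\ep^{(1)}(\sigma^n x)|\bigr)$; since $|\ln\ep^{(j)}|\in L^1(\mu)$ by \eqref{intlogdist}, Birkhoff forces $|\ln\ep^{(j)}(\sigma^n x)|=o(n)$ and likewise $|\Phi(\sigma^n x)|=o(n)$ a.s., hence $|\ln\delta|/n\to\bar g$ and $|R_n|/|\ln\delta|\to 0$. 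Combining everything, the average limit equals $\bar g^{-1}\int\Phi\,d\mu$; rewriting $\mu=\H^D(F)^{-1}\H^D|_F$ recovers exactly \eqref{curvdens-int}.

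It remains to address integrability in the two automatic cases. For $k=d$, $C_d(F(\ep),\cdot)$ is Lebesgue measure while $A_F(x,\ep)\subset\partial F(\ep)$ is Lebesgue-null, so $f\equiv0$. For $k=d-1$, $C_{d-1}(F(\ep),A_F(y,\ep))\le\tfrac12\H^{d-1}(\partial F(\ep)\cap B(y,a\ep))$, which a localised form of \eqref{sa} combined with the volume estimate $\H^d(F(\ep)\cap B(y,(a+1)\ep))\le C\ep^d$ (derivable from the $D$-set property \eqref{D-set}) bounds by $O(\ep^{d-1})$; hence $f(y,\ep)\le C$ uniformly, so $\Phi(y)\le C\log\bigl(\ep^{(0)}(y)/\ep^{(1)}(y)\bigr)$, and integrability follows once more from \eqref{intlogdist}. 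The main obstacle throughout is the simultaneous a.s.\ control of $R_n$ and of the discrepancy $|\ln\delta|-|\ln r_{x|n}|$, both of which rest on \eqref{intlogdist} and on Birkhoff's theorem applied to the (possibly unbounded) function $\Phi$.
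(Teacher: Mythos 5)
Your proposal is correct and follows essentially the same route as the paper's own proof: the same scaling identity for the locally homogeneous net, the same unfolding of the integral into Birkhoff sums of $\Phi$ under the induced shift, the same stopping rule tying $|\ln\delta|$ to $n$ via \eqref{intlogdist}, and the same disposal of the cases $k\in\{d-1,d\}$. The only cosmetic fix is that the remainder bound should read $|R_n(\delta,x)|\le\int_{\ep^{(1)}(\sigma^n x)}^{\ep^{(0)}(\sigma^n x)}|f(\sigma^n x,\ep)|\,\ep^{-1}d\ep$ rather than $|\Phi(\sigma^n x)|$ (the integrand is signed), and this majorant is exactly what the assumed absolute convergence, together with Birkhoff's theorem, controls.
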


\begin{rems}
In Section 4 it will be shown that for the choice of $A_F$ as in Example \eqref{expl3} the constant limit values
$D_{C_k^{\Frac}|F}(x)$, defined by \eqref{dens} for $\H^D$-a.a.\ $x\in F$,
may be interpreted as densities of associated fractal curvature measures.
\end{rems}
\begin{rems}
Using (\eqref{intlogdist}, one can see that a sufficient (sharper) condition for the absolute convergence of the integral is
\begin{equation}\label{bounded}
\esup_{\ep<\ep_0,\, y\in F}\limits \ep^{-k} \big|\, C_k\big(F(\ep),A_F(y,\ep)\big)\big|<\infty\, .
\end{equation}
For polyconvex neighborhoods $F(\ep)$ the last property follows, even for the {\it variation measures} $C_k^{\var}(F(\ep),\cdot)$, like in the proof of Lemma 5.3.2 in Winter \cite{Wi06}. (The curvature measures are local, by the open set condition only a bounded number of parallel sets of the smaller copies of $F$ of diameters equivalent to $\ep$ can intersect the set $A_F(x,\ep)$, these sets are unions of bounded numbers of convex sets of diameters equivalent to $\ep$, and the total variation of the $k$-th curvature measure of such a union set is bounded by $\const\ep^k$.) For a more general sufficient condition see Remark~\ref{uniformbound} below and Section~\ref{S-ex}.
\end{rems}
\subsection{An associated dynamical system - proof of the theorem}\label{ssec:dynsyst}
As an essential auxiliary tool for the proof we use the ergodic {\it shift dynamical system} $[W,\nu,\theta]$ on the code space $W$ for the shift operator
$\theta: W\rightarrow W$ with $\theta(w_1w_2\dots):=(w_2w_3\ldots)$. According to \eqref{bernoulli-meas} it induces the ergodic dynamical system $[F,\mu,T]$, where the transformation $T:F\rightarrow F$ is defined for $\mu$-a.a. $x$ by
$$T(x):=(S_j)^{-1}(x)~ \mbox{if}~ x\in S_j(F)\, ,~j=1,\ldots N\, ,$$
taking into regard that $\mu(S_i(F)\cap S_j(F))=0,\,  i\neq j$.
(More general references on this subject may be found, e.g., in Falconer \cite{Fa97}, Mauldin and Urbanski \cite{MU03}.)
In the above identification of a.a. points with their coding sequences we have for such $x$, $$T(x)=\theta(x_1x_2\ldots)\, .$$
Next note that $\ep<b^{-1}d(x,(S_{x|i}J)^c)$ implies $\ep<r_{x|i}^{-1}\ep<\ep_0$, since
$d(x,(S_{x|i}J)^c)=r_{x|i}\, d(T^ix,J^c)$.
From this and $A_F(x,\ep)\subset B(x,a\ep)$ we obtain for Lebesgue-a.a. $\ep$, $\mu$-a.a. $x$, and $i\in\mathbb{N}$ satisfying the first condition the equalities
\begin{eqnarray*}
C_k\big(F(\ep),A_F(x,\ep)\big)&=&C_k\big(F_{x|i}(\ep), A_F(x,\ep)\big)= C_k\big(F_{x|i}(\ep),S_{x|i}\big(A_F(T^ix,(r_{x|i}^{-1}\ep)\big)\big)\\
&=&r_{x|i}^k C_k\big(F(r_{x|i}^{-1}\ep),A_F(T^ix,(r_{x|i}^{-1}\ep)\big)\, .
\end{eqnarray*}
Here we have used the locality \eqref{loc-curv} of the curvature measure $C_k$, the representation \eqref{loc-hom-neighbor} of the sets $A_F(x,\ep)$, and the scaling property \eqref{scaling} of $C_k$ under similarities.\\
Now we will verify the limit
\begin{eqnarray*}
& &\lim_{\delta\rightarrow 0}\limits\frac{1}{|\ln\delta|}\int_\delta^{b^{-1}d(x,J^c)}\limits\ep^{-k}\, C_k\big(F(\ep),A_F(x,\ep)\big)\, \ep^{-1}d\ep\\
&=&\lim_{\delta\rightarrow 0}
\limits\frac{n(x,\delta)}{|\ln\delta|}\frac{1}{n(x,\delta)}\bigg(\sum_{i=0}^{n(x,\delta)-1}\limits\int_{b^{-1}d(x,(S_{x |(i+1)}J)^c)}
^{b^{-1}d(x,(S_{x|i}J)^c)}\ep^{-k}C_k\big(F(\ep),A_F(x,\ep)\big)\, \ep^{-1}d\ep\\
& &\qquad \qquad \qquad \qquad \qquad \quad +\int_\delta^{b^{-1}d(x,(S_{x|n(x,\delta)}J)^c)}\ep^{-k}C_k\big(F(\ep),A_F(x,\ep)\big)\, \ep^{-1}d\ep\bigg) ,
\end{eqnarray*}
where $$n(x,\delta):=\max\{n\in\N: b^{-1}d(x,(S_{x|n}J)^c)\ge\delta\}\, .$$
By the above relationship the integrand in the $i$th integral may be replaced by
$$r_{x|i}^k\, \ep^{-k}\, C_k\big(F(r_{x|i}^{-1}\ep),A(T^ix,r_{x|i}^{-1}\ep)\big)\, \ep^{-1}\, .$$
For the integral bounds we use
\begin{eqnarray*}
d(x,(S_{x|i}J)^c)&=&r_{x|i}\, d(T^ix,J^c) ,\\
d(x,(S_{x|(i+1)}J)^c)&=&r_{x|i}\, d(T^ix,(S_{(T^ix)_1}J)^c)\, .
\end{eqnarray*}
Substituting then under the integral $r_{x|i}^{-1}\, \ep$ by $\ep$ we obtain the expression
$$\int_{b^{-1}d(T^ix,(S_{(T^ix)_1}J)^c)}
^{b^{-1}d(T^ix,J^c)}\ep^{-k}C_k\big(F(\ep),A_F(T^ix,\ep)\big)\, \ep^{-1}d\ep\, .$$
Therefore it suffices to show that for $\mu$-a.a. $x\in F$ the following integrals and limit relationships exist:
\begin{eqnarray}\label{birkhoff}\label{limbirkhoff}
& &\lim_{n\rightarrow\infty}\frac{1}{n}\sum_{i=1}^n\int_{b^{-1}d(T^ix,(S_{(T^ix)_1}J)^c)}
^{b^{-1}d(T^ix,J^c)}\ep^{-k}C_k\big(F(\ep),A_F(T^ix,\ep)\big)\, \ep^{-1}d\ep\\
&=& \int_F\int_{b^{-1}d(y,(S_{y_1}J)^c)}
^{b^{-1}d(y,J^c)}\ep^{-k}C_k\big(F(\ep),A_F(y,\ep)\big)\, \ep^{-1}d\ep\, \mu(dy)\, ,
\end{eqnarray}
\begin{equation}\label{neglible}
\lim_{n\rightarrow\infty}\frac{1}{n}\int_{b^{-1}d(T^nx,(S_{(T^nx)_1}J)^c)}
^{b^{-1}d(T^nx,J^c)}\ep^{-k}\big|C_k\big(F(\ep),A_F(T^nx,\ep)\big)\big|\, \ep^{-1}d\ep=0\,  ,
\end{equation}
(note that under the above conditions $b^{-1}d(T^nx,(S_{(T^nx)_1}J)^c)<\delta$), and
\begin{equation}\label{factor}
\lim_{\delta\rightarrow 0}\frac{|\ln\delta|}{n(x,\delta)}=\sum_{j=1}^Nr_j^D\, |\ln r_j|\, .
\end{equation}
Under the integrability assumption of our theorem \eqref{birkhoff} follows from Birkhoff's ergodic theorem applied to the ergodic dynamical system $[F,\mu,T]$.
(For $k\in \{d-1,d\}$, see Remark~\ref{Rem_int}.)
Here the curvature measures may also be replaced by their absolute values. Taking into regard that $a_n=\sum_{i=1}^na_i-\sum_{i=1}^{n-1}a_i$ for any real sequence, \eqref{neglible} is a consequence.\\
In order to use these arguments for \eqref{factor}, too, note that for $\delta(x,n):=b^{-1}d(x,(S_{x|n}J)^c)$ we get
$$\lim_{\delta\rightarrow 0}\frac{|\ln\delta|}{n(x,\delta)}=\lim_{n\rightarrow\infty}\frac{|\ln\delta(x,n)|}{n}$$
provided the last limit exists. Since
$$\delta(x,n)=r_{x|n}\, b^{-1}d(T^nx,J^c)=\prod_{i=1}^nr_{x_i}\, b^{-1}d(T^nx,J^c)$$
and $x_i=(T^ix)_1\, ,~i\in\N$, Birkhoff's ergodic theorem implies for $\mu$-a.a. $x\in F$
$$\lim_{n\rightarrow\infty}\frac{1}{n}\, |\ln \prod_{i=1}^nr_{x_i}|=\lim_{n\rightarrow\infty}\frac{1}{n}\sum_{i=1}^n|\ln r_{(T^ix)_1}|=\int_F|\ln r_{y_1}|\, \mu (dy)=
\sum_{j=1}^N |\ln r_j|\, r_j^D$$
as well as
$$\lim_{n\rightarrow\infty}\frac{1}{n}\, |\ln d(T^nx,J^c)|=0\, $$
since
 $\int|\ln d(y,J^c)|\, \mu(dy)<\infty$ (cf.\ \eqref{intlogdist}). This shows \eqref{factor} and thus, the proof of the theorem is completed.

\section{Global versus local curvatures}
\subsection{Existence of global curvatures of self-similar sets}
Our final aim is to show under slightly stronger conditions that the local fractal curvatures $D_{C_k^{\Frac}|F}(x)$ from \eqref{dens} may be interpreted as certain densities of associated fractal curvature measures. To this aim we first deduce from Theorem \ref{curvdens} the existence of {\it global fractal curvatures} and establish a relationship to the local versions. (At the same time this provides a simpler proof and a certain extension of the related deterministic result from \cite{Za10} for the global curvatures avoiding the renewal theorem. However, the use of the latter provides more information concerning convergence without averaging over the distances $\ep$.)

\begin{thms}\label{globcurv}
Let $k\in\{0,1,\ldots,d\}$ and suppose that the self-similar set $F$ in $\rd$ with contraction ratios $r_1,\ldots,r_N$ and Hausdorff dimension $D$ satisfies (SOSC) w.r.t.\ $\Int J$. For $k\le d-2$ we additionally assume the neighborhood regularity \eqref{neighborregularity}.
Let $\{A(x,\ep): x\in F,\, \ep <\ep_0=\H^D(F)^{1/D}\}$, be the locally homogeneous neigborhood net of $F$ given in \eqref{expl3} with constant $a=2c_F^{-1/D}$, where $c_F\leq 1$ fulfills \eqref{D-set}, and let
 $b=\max\big(2a,\ep_0^{-1}|J|\big)$. If for $k\le d-2$,
 \begin{equation}  \label{**}
\int_F\sup_{\delta<\ep_0}\frac{1}{|\ln\delta|}\int_\delta^{\ep_0}\ep^{-k}C_k^{\var}\big(F(\ep), A_F(x,\ep)\big)\, \, \ep^{-1}d\ep\, \H^D(dx)<\infty\, ,
\end{equation}
which is always true for $k\in\{d-1,d\}$,
then the following limit exists
$$C_k^{\Frac}(F):=\lim_{\delta\rightarrow 0}\frac{1}{|\ln\delta|}\int_\delta^{\ep_0} \ep^{D-k}C_k\big(F(\ep)\big)\, \, \ep^{-1}d\ep$$
and equals
$$\big(\sum_{j=1}^N r_j^D|\ln r_i|\big)^{-1}\int_F\int_{b^{-1}d(y,(S_{y_1}J)^c)}^{b^{-1}d(y,J^c)}\ep^{-k}C_k\big(F(\ep), A_F(y,\ep)\big)\, \, \ep^{-1}d\ep\, \mathcal{H}^D(dy)\, ,$$
i.e., $$C_k^{\Frac}(F)= D_{C_k^{\Frac}|F}\, \H^D(F)\, .$$
\end{thms}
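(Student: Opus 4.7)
My strategy is to reduce Theorem~\ref{globcurv} to Theorem~\ref{curvdens} via an averaging identity that is available precisely because of the special choice \eqref{expl3}. The crucial input is that, by the very definition of $\rho_F(z,\ep)$ together with the continuity of $\rho\mapsto\H^D(F\cap B(z,\rho))$ guaranteed by Lemma~\ref{spheres}, one has $\H^D(F\cap\overline{B(z,\rho_F(z,\ep))})=\ep^D$ for every $z\in\rd$ and $\ep\in(0,\ep_0)$. Since for $k\le d-1$ the measure $C_k(F(\ep),\cdot)$ is concentrated on $\partial F(\ep)$, Fubini applied to $(x,z)\mapsto\mathbf{1}_{A_F(x,\ep)}(z)$ against the product of $\H^D|_F$ and the finite variation measure $C_k^{\var}(F(\ep),\cdot)$ yields the key identity
\begin{equation}\label{globkey}
\int_F C_k\bigl(F(\ep),A_F(x,\ep)\bigr)\,\H^D(dx)=\ep^D\,C_k(F(\ep)),\qquad\ep\in(0,\ep_0).
\end{equation}
(For $k=d$ both sides vanish since $\partial F(\ep)$ has Lebesgue measure zero under the regularity assumption, so the theorem is $0=0$ in that case.)

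Multiplying \eqref{globkey} by $\ep^{-k-1}$, integrating over $\ep\in(\delta,\ep_0)$, dividing by $|\ln\delta|$, and swapping the orders of integration---justified by hypothesis \eqref{**} when $k\le d-2$ and automatic for $k\in\{d-1,d\}$---gives
\begin{equation*}
\frac{1}{|\ln\delta|}\int_\delta^{\ep_0}\ep^{D-k}C_k(F(\ep))\,\ep^{-1}d\ep=\int_F\frac{1}{|\ln\delta|}\int_\delta^{\ep_0}\ep^{-k}C_k\bigl(F(\ep),A_F(x,\ep)\bigr)\,\ep^{-1}d\ep\,\H^D(dx).
\end{equation*}
The inner expression differs from the one whose limit is identified in Theorem~\ref{curvdens} only through its upper cutoff ($\ep_0$ versus $b^{-1}d(x,J^c)\le\ep_0$); the discrepancy is at most $|\ln\delta|^{-1}\int_{b^{-1}d(x,J^c)}^{\ep_0}\ep^{-k-1}C_k^{\var}(F(\ep),A_F(x,\ep))\,d\ep$, a quantity independent of $\delta$ which therefore vanishes as $\delta\to 0$ for $\H^D$-a.e.\ $x$. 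Consequently the inner average converges pointwise $\H^D$-a.e.\ to the constant $D_{C_k^{\Frac}|F}(x)$ supplied by Theorem~\ref{curvdens}.

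Finally, I pass to the limit under the outer integral using dominated convergence. The integrand is dominated uniformly in $\delta$ by the $\H^D$-integrable majorant provided by \eqref{**}, so the limit equals $\int_F D_{C_k^{\Frac}|F}(x)\,\H^D(dx)=D_{C_k^{\Frac}|F}\cdot\H^D(F)$; inserting the explicit formula \eqref{curvdens-int} for $D_{C_k^{\Frac}|F}(x)$ reproduces the displayed expression in the statement, completing the proof. The principal technical delicacy is this dominated-convergence step: condition \eqref{**} is designed precisely to furnish a $\delta$-uniform integrable dominant, which is exactly what makes the interchange of limit and integral legitimate and keeps the argument free of the renewal-theoretic machinery used in \cite{Za10}.
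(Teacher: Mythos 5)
Your argument follows the paper's proof essentially step for step: your key identity is exactly the paper's equation \eqref{int}, obtained in the same way from $\H^D\big(F\cap B(z,\rho_F(z,\ep))\big)=\ep^D$ together with Fubini; the interchange of the $\ep$- and $x$-integrations and the final passage to the limit under the outer integral via the $\delta$-uniform integrable majorant supplied by \eqref{**} are also precisely the paper's steps. Your explicit treatment of the mismatch between the upper cutoffs $\ep_0$ and $b^{-1}d(x,J^c)$ is a detail the paper leaves implicit, and you handle it correctly: for $\H^D$-a.e.\ $x$ the tail integral is a fixed finite quantity (finiteness follows from \eqref{**}, resp.\ from Remark~\ref{Rem_int}), which is killed by the factor $|\ln\delta|^{-1}$.

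There is, however, a genuine error in your disposal of the case $k=d$. The theorem for $k=d$ is not ``$0=0$'': since $C_d(F(\ep))=\H^d(F(\ep))$ is the volume of the parallel set, the asserted limit is the average Minkowski content of $F$, which is strictly positive (this is Gatzouras's theorem, as the remark following the statement records). The source of the trouble is that $C_d(F(\ep),\cdot)$ is Lebesgue measure restricted to $F(\ep)$, not a measure concentrated on $\partial F(\ep)$; if $A_F(x,\ep)$ is read literally as the subset of $\partial F(\ep)$ from \eqref{expl3}, then the left-hand side of your identity vanishes for $k=d$ while the right-hand side $\ep^D\H^d(F(\ep))$ does not, so the identity as you state it fails in that case. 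The paper's own computation integrates $\mathbf{1}\left(|x-z|\le\rho_F(z,\ep)\right)$ over all of $F(\ep)$ against $C_d(F(\ep),dz)$, i.e.\ it tacitly drops the restriction to $\partial F(\ep)$ for $k=d$; with that reading the Fubini computation goes through verbatim and the remainder of your argument applies unchanged. You should repair the $k=d$ case along these lines rather than declaring it trivial; as written, your proof does not establish the theorem for $k=d$.
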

\vspace{3mm}

\begin{rems}
For $k=d$ this was proved by Gatzouras \cite{Ga00} and for $k=d-1$ by Rataj and Winter \cite[Theorem~4.4]{RW10}. Moreover, it was shown that
$$C_{d-1}^{\Frac}(F)=(d-D)C_d^{\Frac}(F),$$
see \cite[Theorem~4.7]{RW10}.
\end{rems}

\begin{rems}\label{uniformbound}
A sufficient condition for the desired integrability properties is \eqref{bounded} for the variation measures, i.e.,
\begin{equation}  \label{bound*}
\esup_{\ep<\ep_0,\, y\in F}\limits \ep^{-k}C_k^{\var}\big(F(\ep), A_F(y,\ep)\big)<\infty
\end{equation}
which is fulfilled for the special case of polyconvex neighborhoods. More generally, Lemma 3.3 in Winter and Z\"ahle \cite{WiZa} shows that the conditions of Theorem 2.3 in that paper imply the last uniform estimate. Therefore, Theorem \ref{globcurv} extends the first part of that theorem which was proved in \cite[Corollary 2.3.9]{Za10} (cf. the examples in Section 4).
\end{rems}

\begin{proof}
The problem can easily be reduced to Theorem \ref{curvdens} taking into regard the relationship
\begin{equation}\label{int}
\int_F C_k\big(F(\ep), A_F(x,\ep)\big)\, \mathcal{H}^D(dx)
= \ep^D C_k\big(F(\ep)\big)
\end{equation}
for a.a. $\ep<\ep_0$, which follows from our choice of the sets $A(x,\ep)$ according to \eqref{expl3}:
\begin{eqnarray*}
\int_F C_k\big(F(\ep), A_F(x,\ep)\big)\, \mathcal{H}^D(dx)=\int_F\int_{F(\ep)}{\bf 1}\left(|x-z|\le \rho_F(z,\ep)\right)\, C_k(F(\ep),dz)\, \mathcal{H}^D(dx)\\
=\int_{F(\ep)} \H^D\left(F\cap B(z,\rho_F(z,\ep)\right)\, C_k(F(\ep),dz)= \int_{F(\ep)}\ep^D\,  C_k(F(\ep),dz)=\ep^D C_k(F(\ep))\, .
\end{eqnarray*}
Then we get
$$\frac{1}{|\ln\delta|}\int_\delta^{\ep_0} \ep^{D-k}C_k\big(F(\ep)\big)\, \, \ep^{-1}d\ep=\int_F\frac{1}{|\ln\delta|}\int_\delta^{\ep_0} \ep^{-k}C_k\big(F(\ep), A_F(x,\ep)\big)\, \, \ep^{-1}d\ep\, \H^D(dx)\, .$$
The conditions for applying Fubini are guaranteed by the integrability assumption of the theorem.
Moreover, the functions under the last outer integral are uniformly bounded by an integrable function. Therefore we can take the limit as $\delta\rightarrow 0$ under this integral, and Theorem~\ref{curvdens} implies the assertion.
(For the integrability conditions in the case $k\in\{ d-1,d\}$, see Remark~\ref{Rem_int}.)
\end{proof}

\subsection{Fractal curvature measures and their densities}
Because of the self-similarity of $F$ the global curvatures are reflected on its smaller copies. This leads to a method of proving weak convergence of the curvature measures of the parallel sets to fractal limit measures, which was first applied in Winter \cite{Wi06} for the case of polyconvex neighborhoods and was extended in Winter and Z\"ahle \cite{WiZa} to a more general setting. The limit measures $C_k^{\Frac}(F,\cdot)$ were shown to be constant multiples of the normalized Hausdorff measure $\mu$ on $F$, where the constants are equal to the corresponding total fractal curvatures $C_k^{\Frac}(F)$.\\ The following extension provides a local limit interpretation in view of Theorem \ref{curvdens}: The constant local limits $D_{C_k^{\Frac}|F}$ from \eqref{dens}, which are equal to $\H^D(F)^{-1}C_k^{\Frac}(F)$ by Theorem \ref{globcurv}, are the {\it densities of associated fractal curvature measures} $C_k^{\Frac}(F,\cdot )$ with respect to Hausdorff measure $\H^D$ on $F$. Here we need a slightly stronger assumption which is, however, much weaker than the uniform boundedness \eqref{bound*} assumed in the former papers.\\

\begin{thms}\label{curvmeas}
Let $k\in\{0,1\ldots,d\}$ and suppose that the self-similar set $F$ in $\rd$ with contraction ratios $r_1,\ldots,r_N$ and Hausdorff dimension $D$ satisfies (SOSC) w.r.t.\ $\Int J$. For $k\le d-2$ we additionally assume the neighborhood regularity \eqref{neighborregularity}.
Let $\{A(x,\ep): x\in F,\, \ep <\ep_0=\H^D(F)^{1/D}\}$, be the locally homogeneous neigborhood net of $F$ given in \eqref{expl3} and let $a,b>0$ be as in Theorem~\ref{globcurv}. If for $k\le d-2$,
\begin{equation}  \label{*}
\sup_{\delta<\ep_0}\frac{1}{|\ln\delta|}\int_\delta^{\ep_0}\ep^{-k}\sup_{x\in F}\limits C_k^{\var}\big(F(\ep), B(x,a\ep)\big)\, \, \ep^{-1}d\ep<\infty\, ,
\end{equation}
which is always true for $k\in\{d-1,d\}$,
then we get
$$C_k^{\Frac}(F,\cdot ):=\lim_{\delta\rightarrow 0}\frac{1}{|\ln\delta|}\int_\delta^{\ep_0} \ep^{D-k}C_k\big(F(\ep),\cdot \big)\, \, \ep^{-1}d\ep= D_{C_k^{\Frac}|F}\, \H^D(F\cap(\cdot ))$$
in the sense of weak convergence of signed measures, where the density $D_{C_k^{\Frac}|F}$ can be calculated by \eqref{curvdens-int}.
\end{thms}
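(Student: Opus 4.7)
The plan is to prove weak convergence of the signed measures
$$\mu_\delta := \frac{1}{|\ln\delta|}\int_\delta^{\ep_0} \ep^{D-k}\, C_k\big(F(\ep),\cdot\big)\, \ep^{-1} d\ep$$
to $D\cdot \H^D(F\cap\cdot)$, where $D:=D_{C_k^{\Frac}|F}$ is the $\H^D$-a.s.\ constant density from Theorem~\ref{curvdens}. Since \eqref{*} gives a uniform bound on the total variation $|\mu_\delta|$ and all measures are concentrated on the compact set $F(\ep_0)$, it suffices to check that $\int f\, d\mu_\delta \to D\int_F f\, d\H^D$ for every bounded continuous $f:\rd\to\R$.

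The key step is to disintegrate the integrand along $\H^D|_F$ via the defining property of the net \eqref{expl3}. The identity $\H^D(F\cap B(z,\rho_F(z,\ep)))=\ep^D$ reads
$$\int_F \mathbf{1}_{A_F(x,\ep)}(z)\, \H^D(dx) = \ep^D, \qquad z\in\partial F(\ep),$$
so multiplying by $f(z)$ and integrating against $C_k(F(\ep),dz)$, then applying Fubini (justified by \eqref{*} exactly as in the proof of Theorem~\ref{globcurv}), yields
$$\int f\, d\mu_\delta = \int_F \Phi_\delta(x)\, \H^D(dx), \quad \Phi_\delta(x) := \frac{1}{|\ln\delta|}\int_\delta^{\ep_0} \ep^{-k}\int_{A_F(x,\ep)} f(z)\, C_k(F(\ep),dz)\,\ep^{-1}d\ep.$$
For $\H^D$-a.e.\ $x\in F$ I would next show $\Phi_\delta(x)\to f(x)\cdot D$. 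Split $f(z)=f(x)+(f(z)-f(x))$ under the inner integral. The constant piece converges to $f(x)\cdot D$ by Theorem~\ref{curvdens}, the difference between the two upper limits $b^{-1}d(x,J^c)\le \ep_0$ contributing only a bounded quantity (controlled by \eqref{*}) which is killed by the prefactor $1/|\ln\delta|$. For the remainder, $A_F(x,\ep)\subset B(x,a\ep)$ and the modulus of continuity $\omega_f$ of $f$ on a compact neighborhood of $F$ give the bound
$$\frac{1}{|\ln\delta|}\int_\delta^{\ep_0} \omega_f(a\ep)\, \ep^{-k}\sup_{y\in F} C_k^{\var}\big(F(\ep), B(y, a\ep)\big)\, \ep^{-1}d\ep;$$
splitting the integration domain at an arbitrary small $\eta>0$, using $\omega_f(a\ep)\to 0$ on $(0,\eta]$ together with the sup in \eqref{*} on the first piece, and the fact that the second piece is $O(1/|\ln\delta|)$ on the fixed interval $[\eta,\ep_0]$, gives vanishing in the limit upon sending $\delta\to 0$ and then $\eta\downarrow 0$.

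Finally, assumption \eqref{*} delivers a uniform $\H^D|_F$-integrable envelope for $|\Phi_\delta|$, namely $\|f\|_\infty$ times the finite supremum appearing in \eqref{*}, so dominated convergence passes the limit through the outer $\H^D$-integral and gives the claim. The main technical obstacle is the remainder estimate above: the pointwise smallness $\omega_f(a\ep)\to 0$ would be defeated by $\ep^{-k} C_k^{\var}(F(\ep),B(y,a\ep))$ growing with $\ep\to 0$ if one did not have the $\sup_\delta$-finite logarithmic average in \eqref{*} at one's disposal — this is precisely why the slight strengthening from \eqref{**} to \eqref{*} is needed, and why the pointwise density result of Theorem~\ref{curvdens} alone is not enough.
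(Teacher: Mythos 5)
Your argument is correct, but it takes a genuinely different route from the paper's. The paper does not give a self-contained proof: it invokes the Prohorov-compactness machinery of Winter--Z\"ahle \cite{WiZa} (tightness of the rescaled measures, invariance under the maps $S_w$, identification of the limit via the global Theorem~\ref{globcurv}), and its written proof consists of specifying the modifications needed there -- a changed definition of the word set $\Sigma(\ep)$ and the replacement of the pointwise bound of \cite[Lemma 3.3]{WiZa} by the averaged bound \eqref{number}, which is then deduced from \eqref{*} by a locality/covering estimate. You instead exploit the special net \eqref{expl3} directly: the identity $\int_F\1_{A_F(x,\ep)}(z)\,\H^D(dx)=\ep^D$ (the same identity \eqref{int} underlying Theorem~\ref{globcurv}) disintegrates $\int f\,d\mu_\delta$ into $\int_F\Phi_\delta\,d\H^D$, the pointwise limit $\Phi_\delta(x)\to f(x)\,D_{C_k^{\Frac}|F}$ follows from Theorem~\ref{curvdens} together with your modulus-of-continuity estimate, and dominated convergence (with the constant envelope supplied by \eqref{*}) finishes. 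This is cleaner, fully self-contained, and makes the density interpretation transparent. Two small points: (i) your Fubini step and the asserted uniform total-variation bound on $\mu_\delta$ need the observation that $\partial F(\ep)$ is covered by $O(\ep^{-D})$ balls $B(x,a\ep)$ with centers in $F$ (since $F$ is a $D$-set), so that $\ep^{D-k}C_k^{\var}(F(\ep),\rd)$ is controlled by $\ep^{-k}\sup_x C_k^{\var}(F(\ep),B(x,a\ep))$; this should be stated. (ii) Your closing claim that the strengthening from \eqref{**} to \eqref{*} is ``precisely why'' the argument works is not borne out by your own proof: in the remainder estimate you may keep $C_k^{\var}(F(\ep),B(x,a\ep))$ at the fixed point $x$ and use the $\H^D$-a.e.\ finiteness guaranteed by \eqref{**}, and the dominated-convergence envelope likewise needs only \eqref{**}. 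The uniform-in-$x$ condition \eqref{*} is what the paper's route through \cite{WiZa} requires; your direct route appears to get by with less, which is a point in its favor rather than a defect.
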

\vspace{3mm}
\begin{rems}  \label{Rem_int}
Recall that under our assumptions $A(x,\ep)\subset B(x,a\ep)$ for any $x\in F$ and $\ep<\ep_0$ (cf. \eqref{ball-property}). Therefore the integrability condition \eqref{*} implies \eqref{**}.
Moreover, \eqref{**} ensures the the existence of the integrals assumed in Theorem~\ref{curvdens}.
In case $k\in\{ d-1,d\}$, even the stronger condition \eqref{bound*} for general locally homogeneous neighborhood nets $A(F(x,\ep)$ is always satisfied. For $k=d$ it is obvious, and for $k=d-1$ it can be seen as follows:
\begin{eqnarray*}
C_{d-1}(F(\ep),A_F(x,\ep))&=& \frac 12 \H^{d-1}(\partial F(\ep)\cap A_F(x,\ep))
\leq \frac 12 \H^{d-1}(\partial F(\ep)\cap B(y,a\ep))\\
&\leq& \frac 12 \H^{d-1}(\partial [(F\cap B(y,2a\ep))(\ep)])
\stackrel{\eqref{sa}}{\leq} \frac {d}{2\ep} \H^{d}((F\cap B(y,2a\ep))(\ep))\\
&\leq& \frac {d}{2\ep} \H^{d}(B(y,(2a+1)\ep))
=\frac d{2}\omega_d(2a+1)^d\ep^{d-1},
\end{eqnarray*}
where $\omega_d$ denotes the volume of a unit ball in $\R^d$.
\end{rems}

\begin{proof}
Under a stronger boundedness condition this theorem has been shown in \cite{WiZa}. An essential tool is the corresponding global result (in our case Theorem \ref{globcurv}) in combination with Prohorov's theorem on weak compactness of tight families of measures and the invariance properties of $F$ and the measures under consideration. An analysis of the technically involved proof shows that it remains valid in the essential steps under the following changes: Use in the definition \cite[(2,2)]{WiZa} for the set $\Sigma(\ep)$ of finite words $w$ (with length $|w|$) the modified condition  $$r_w|J|<\ep\le r_{w||w|-1}|J|\, .$$
Then the statement
$$C_k^{\var}\big(F(\ep),C\big)\le \const N(C,\ep)\ep^k$$
of \cite[Lemma 3.3]{WiZa} is replaced by the averaged version
\begin{equation}\label{number}
\sup_{\delta<\ep_0}\frac{1}{|\ln\delta|}\int_\delta^{\ep_0}\big(N(C,\ep)\big)^{-1}\ep^{-k}C_k^{\var}\big(F(\ep),C\big)\, \, \ep^{-1}d\ep\, <\infty\, ,
\end{equation}
provided $C$ is a closed subset of $\rd$ and the number of elements $w$ of the set $\Omega(C,\ep)\subset \Sigma(\ep)$ such that the set $F_w(\ep)$ intersects $C$ is bounded by $N(C,\ep)>0$. All implied estimates used in the proof have now to be understood in this average sense. The rest is the same as in \cite{WiZa}.\\
In order to see \eqref{number}, denote $$K:=\sup_{\delta<\ep_0}\frac{1}{|\ln\delta|}\int_\delta^{\ep_0}\ep^{-k}\sup_{x\in F}\limits C_k^{\var}\big(F(\ep), B(x,a\ep)\big)\, \, \ep^{-1}d\ep\, ,$$
which is finite by assumption, and estimate as follows
\begin{eqnarray*}
C_k\big(F(\ep),C\big)&=&C_k\left(F(\ep),C\cap\bigcup_{w\in \Sigma(\ep)}F_w(\ep)\right)\le C_k\left(F(\ep),\bigcup_{w\in \Omega(C,\ep)}F_w(\ep)\right)\\&\le& \sum_{w\in \Omega(C,\ep)}C_k\big(F(\ep),F_w(\ep)\big)\le N(C,\ep)\sup_{x\in F}\limits C_k^{\var}\big(F(\ep), B(x,a\ep)\big)\, ,
\end{eqnarray*}
since for any $w\in\Sigma(\ep)$ the set $F_w(\ep)$ is contained in a ball with midpoint in $F$ and radius $a\ep$.  This shows that the constant $K$ is an upper bound in \eqref{number}.
\end{proof}

\section{Examples}  \label{S-ex}
In this section we provide two examples of self-similar sets fulfilling the integrability assumption \eqref{*}, but violating \eqref{bounded} (and, hence, also \eqref{bound*}). Therefore the methods of Winter and Zähle \cite{WiZa} are not applicable in these cases, whereas those of the current paper are.

We shall use the fact that the positive part of the curvature measure of order $0$ of a parallel set to any compact subset of the plane is bounded by a constant depending on the parallel radius and diameter of the set only.
Hence, it is enough to control the global curvature.

\begin{lem} \label{L*}
Let $F\subset \R^2$ be compact and let $\ep>0$ be a regular value of $F$. Then
$$C_0^+(F(\ep),\R^2)\leq \frac{2}{\ep^2}\left(\frac{|F|}2+\ep\right)^2.$$
\end{lem}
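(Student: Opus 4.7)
The strategy is to combine the Gauss--Bonnet representation of $C_0$ in the plane with the pointwise upper bound $\kappa\le 1/\ep$ coming from the inscribed-ball property of the parallel set, and then to convert the resulting length estimate on $\partial F(\ep)$ into an area estimate by means of (\ref{sa}) and of a planar Steiner-formula bound for $(\mathrm{conv}\,F)(\ep)$.

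Since $\ep$ is a regular value, $\widetilde{F(\ep)}$ has positive reach and $\partial F(\ep)$ is a Lipschitz $1$-manifold, so $C_0(F(\ep),\cdot)$ admits the classical Gauss--Bonnet representation
\begin{equation*}
C_0(F(\ep),\cdot) = \frac{1}{2\pi}\,\kappa(z)\,\H^1(dz)\big|_{\partial F(\ep)} + \frac{1}{2\pi}\sum_i \alpha_i\,\delta_{z_i},
\end{equation*}
where $\kappa$ is the signed curvature with respect to the outward normal on the smooth part of $\partial F(\ep)$ and $\alpha_i\in(-\pi,\pi)$ is the exterior angle at each singular boundary point $z_i$. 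At a smooth point $z\in\partial F(\ep)$ with nearest point $y\in F$, the ball $B(y,\ep)\subset F(\ep)$ is inscribed tangent to $\partial F(\ep)$ at $z$, which forces $\kappa(z)\le 1/\ep$; and at a singular point, $F(\ep)=\bigcup_{y\in F}B(y,\ep)$ is locally a finite union of closed $\ep$-discs whose circular arcs meet at $z$, making the corner necessarily reentrant and hence $\alpha_i\le 0$. Consequently the Jordan decomposition of $C_0(F(\ep),\cdot)$ concentrates the positive part on the smooth part of $\partial F(\ep)$:
\begin{equation*}
C_0^+(F(\ep),\R^2) \;=\; \frac{1}{2\pi}\int_{\partial F(\ep)}\kappa^+\,d\H^1 \;\le\; \frac{1}{2\pi\ep}\,\H^1(\partial F(\ep)).
\end{equation*}

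Applying (\ref{sa}) with $d=2$ yields $\H^1(\partial F(\ep))\le(2/\ep)\,\H^2(F(\ep))$. To bound the area, I use $F(\ep)\subset(\mathrm{conv}\,F)(\ep)$ together with the planar Steiner formula
\begin{equation*}
\H^2\big((\mathrm{conv}\,F)(\ep)\big) = \H^2(\mathrm{conv}\,F) + \ep\,\H^1(\partial\,\mathrm{conv}\,F) + \pi\ep^2,
\end{equation*}
the isodiametric inequality $\H^2(\mathrm{conv}\,F)\le\pi|F|^2/4$, and Cauchy's perimeter bound $\H^1(\partial\,\mathrm{conv}\,F)\le\pi|F|$ for planar convex bodies of diameter $|F|$, which together give $\H^2(F(\ep))\le\pi(|F|/2+\ep)^2$. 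Concatenating these estimates,
\begin{equation*}
C_0^+(F(\ep),\R^2) \;\le\; \frac{1}{2\pi\ep}\cdot\frac{2}{\ep}\cdot\pi\Big(\frac{|F|}{2}+\ep\Big)^2 \;=\; \frac{1}{\ep^2}\Big(\frac{|F|}{2}+\ep\Big)^2,
\end{equation*}
which is already stronger than the stated bound.

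The only delicate point is the Gauss--Bonnet decomposition of $C_0$ together with the sign analysis at singular boundary points. The inscribed-ball bound $\kappa\le 1/\ep$ on the smooth part is a direct consequence of the reach of $F(\ep)$ being $\ge\ep$ from the complement side, while the reentrancy of the singular corners is elementary once one recalls that $F(\ep)$ is a union of closed $\ep$-balls. Everything else is standard: (\ref{sa}) is recorded in Section~\ref{sec:classcurv}, and the convex-geometric bounds on $\H^2(\mathrm{conv}\,F)$ and $\H^1(\partial\,\mathrm{conv}\,F)$ are classical.
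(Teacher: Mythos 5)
Your overall strategy is the right one and matches the paper's in outline: bound the curvature of $\partial F(\ep)$ from the appropriate side by $1/\ep$, reduce $C_0^+(F(\ep),\R^2)$ to a multiple of $\H^1(\partial F(\ep))$, and then convert length to area via \eqref{sa} and the isodiametric inequality. But the central step is carried out with machinery that is not available here. You invoke a classical Gauss--Bonnet decomposition of $C_0(F(\ep),\cdot)$ into a curvature density on a ``smooth part'' of $\partial F(\ep)$ plus finitely many corner atoms with exterior angles, and you justify the sign of the atoms by asserting that $F(\ep)$ is \emph{locally a finite union of closed $\ep$-discs} near a singular boundary point. For a general compact $F\subset\R^2$ (e.g.\ a Cantor set) this is false: $F(\ep)$ is a union of uncountably many discs, the set of footpoints at distance $\ep$ from a boundary point need not be finite, $\partial F(\ep)$ is only known to be a Lipschitz $1$-manifold at a regular value, and there is no a priori decomposition into smooth arcs meeting at isolated reentrant corners. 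So the representation on which your sign analysis and your $\frac{1}{2\pi\ep}\H^1(\partial F(\ep))$ bound rest is unjustified; note also that your final constant is a factor $2$ better than the lemma claims, and that improvement comes precisely from this unproved classical representation.

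The paper closes exactly this gap by working on the other side: since $\widetilde{F(\ep)}$ has positive reach, its generalized principal curvature $k(x,n)$ is defined $\H^1$-a.e.\ on the unit normal bundle $\nor\widetilde{F(\ep)}$, and the fact that $F(\ep)$ is an $\ep$-parallel set gives the pointwise bound $k(x,n)\geq -\ep^{-1}$ wherever it exists. The integral representation of $C_0$ over the normal bundle from \cite{Za86b} then yields $C_0^-(\widetilde{F(\ep)})\leq \frac{1}{\pi\ep}\H^1(\partial F(\ep))$ via the co-area formula, and \eqref{sign} converts this into the desired bound on $C_0^+(F(\ep))$. If you want to salvage your argument, replace the classical Gauss--Bonnet step by this normal-bundle representation (or restrict to the case where $F$ is finite and pass to a limit, which would require additional continuity arguments). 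The remainder of your proof is fine: the detour through $\mathrm{conv}\,F$, the Steiner formula and Cauchy's perimeter bound is correct but unnecessary, since the isodiametric inequality applied directly to $F(\ep)$, whose diameter is at most $|F|+2\ep$, already gives $\H^2(F(\ep))\leq\pi\bigl(\frac{|F|}{2}+\ep\bigr)^2$.
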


\begin{proof}
Let $\ep$ be a regular value of $F$ in the sense of \eqref{neighborregularity}. Since $\widetilde{F(\ep})$ has positive reach, its generalized principal curvature $k(x,n)$ is defined $\H^{d-1}$-almost everywhere on the unit normal bundle $\nor\widetilde{F(\ep)}$ of $\widetilde{F(\ep})$, see \cite{Za86b}.  Since $F(\ep)$ is an $\ep$-parallel set, it is not difficult to see that the generalized curvatures are bounded from below by $-\ep^{-1}$ (the curvature of the closure of the complement of an $\ep$-disc) whenever they exist. Thus, using the integral representation from \cite{Za86b}, we get the bound
$$C_0^-(\widetilde{F(\ep)})\leq\frac 1{\pi}\int_{\nor\widetilde{F(\ep)}} \frac{\ep^{-1}}{\sqrt{1+k(x,n)^2}}\,\H^1(d(x,n))=\frac 1{\pi\ep}\H^1(\partial F(\ep))$$
(we have used the co-area formula for the projection $(x,n)\mapsto x$ from $\nor\widetilde{F(\ep)}$ to $\partial F(\ep)$ in the last step).
Recalling \eqref{sign}, we infer
$$C_0^+(F(\ep))=C_0^-(\widetilde{F(\ep)})\leq \frac 1{\pi\ep}\H^1(\partial F(\ep)).$$
We further use \eqref{sa} and the isodiametric inequality, and the proof is finished.
\end{proof}

\begin{examp}  \label{CD}
The {\it Cantor dust with similarity factor} $\rho<\frac 12$ is the self-similar set $F\subset\R^2$ given by four similarities
\begin{align*}
S_1(x,y)&=(\rho x,\rho y),\\
S_2(x,y)&=(\rho x+1-\rho,\rho y),\\
S_3(x,y)&=(\rho x,\rho y+1-\rho),\\
S_4(x,y)&=(\rho x+1-\rho,\rho y+1-\rho).
\end{align*}
We shall show that \eqref{*} holds for $F$, whereas \eqref{bounded} does not.
\end{examp}

\begin{proof}
Set $\tau:=\frac 12-\rho$.
If $\ep>\sqrt{2}\tau$ then $F(\ep)$ is contractible and we have $\chi(F(\ep))=1$.
If $\sqrt{1+\rho^2}\tau<\ep<\sqrt{2}\tau$ then $F(\ep)$ is connected with one hole in the middle and we have $\chi(F(\ep))=0$. More generally, for $k=0,1,2,\ldots$, if $\sqrt{1+\rho^{2k+2}}\tau<\ep<\sqrt{1+\rho^{2k}}\tau$ then $F(\ep)$ is still connected, but contains
$$1+4+4\cdot 2+\cdots +4\cdot 2^{k-1}=1+4(2^k-1)$$
holes, hence
$$\chi(F(\ep))=-2^{k+2}+4.$$
For any $k\geq 0$ and $a>1$, since $\ep>\tau$, we have by Lemma~\ref{L*}
\begin{eqnarray*}
C_0^{\var}(F(\ep),B(x,a\ep))&\leq&C_0^{\var}(F(\ep),\R^2)\\
&\leq&|\chi(F(\ep))|+C_0^+(F(\ep),\R^2)\\
&\leq&2^{k+2}+K/\tau^2
\end{eqnarray*}
with some constant $K$ independent of $x$, $k$ and $\ep$.
Similarly, we get for $\sqrt{2}\tau<\ep<\ep_0$
$$C_0^{\var}(F(\ep),B(x,a\ep))\leq 1+K/\tau^2.$$

From the above considerations, it turns out that the critical values of the distance function are
$$\rho^l\tau, \rho^l\tau\sqrt{1+\rho^{2k}},\quad l=0,1,\ldots,\, k=0,1,\ldots,$$
and they are countably many, hence, \eqref{neighborregularity} is fulfilled. It is also clear, however, that $C_0(F(\ep))$ is unbounded at any neighborhhood of the values $\ep=\rho^l\tau$. This implies, in particular, that \eqref{bounded} is not satisfied (see also the discussion in \cite{Wi10}). Indeed, if $\esup_{\ep<\ep_0,y\in F}|C_0(F(\ep),A_F(y,\ep))|\leq Q$ for some constant $Q$ then we would get as in the proof of Theorem~\ref{curvmeas} $\ep^D|C_0(F(\ep))|\leq \H^D(F) Q$, which would contradict the behavior of the Euler-Poincar\'e characteristic of $F(\ep)$ near the citical points, described above.

Assume now that $\rho^l\tau<\ep<\rho^{l-1}\tau$ for some $l\geq 1$. Then $F(\ep)$ consists of $4^l$ disjoint components $(S_\omega F)(\ep)$, where $\omega$ are words of length $l$. For any such word $\omega$, $(S_\omega F)(\ep)$ is a contraction of $F(\rho^{-l}\ep)$.
Given $x\in F$, let $\Sigma_l(x,a\ep)$ denote the set of all words $w\in\Sigma_l$ such that $S_wF$ hits $B(x,a\ep)$. As $\ep<\rho^{l-1}\tau$, it is easy to see that $\Sigma_l(x,a\ep)$ has at most $(a+1)^2$ elements, by the construction of $F$. Thus, if
$$\sqrt{1+\rho^{2k+2}}\rho^l\tau<\ep<\sqrt{1+\rho^{2k}}\rho^l\tau$$
for some $k\geq 0$ then, by the previous case, we have
\begin{eqnarray*}
C_0^{\var}(F(\ep),B(x,2\ep))&\leq&C_0^{\var}\left(\left(\bigcup\nolimits_{w\in\Sigma_l(x,a\ep)}S_w F\right)(\ep),B(x,a\ep)\right)\\
&\leq& (a+1)^2C_0^{\var}\left( (S_wF)(\ep),\R^2\right)\\
\leq&(a+1)^2(2^{k+2}+K/\tau^2).
\end{eqnarray*}
If $\sqrt{2}\rho^l\tau<\ep<\rho^{l-1}\tau$ then, similarly,
$$C_0^{\var}(F(\ep),A_F(y,\ep))\leq (a+1)^2(1+K^2).$$
Having still $l\geq 1$ fixed, we can estimate the integral
\begin{eqnarray*}
\lefteqn{(a+1)^2\int_{\rho^l\tau}^{\rho^{l-1}\tau}C_0^{\var}(F(\ep),B(x,a\ep))\frac{d\ep}{\ep}}\\
&\leq&\int_{\rho^l\tau}^{\rho^{l-1}\tau}\frac{K}{\tau^2}\frac{d\ep}{\ep}+\left(\sum_{k=0}^\infty \int_{\rho^l\tau\sqrt{1+\rho^{2k+2}}}^{\rho^l\tau\sqrt{1+\rho^{2k}}}2^{k+2}\,\frac{d\ep}{\ep} +\int_{\rho^l\tau\sqrt{2}}^{\rho^{l-1}\tau}\frac{d\ep}{\ep}\right)\\
&=&\frac{K|\ln\rho|}{\tau^2}+\sum_{k=0}^{\infty}2^{k+2}\left( \ln \rho^l\tau\sqrt{1+\rho^{2k}} -\ln \rho^l\tau\sqrt{1+\rho^{2k+2}}\right)\\
&&\hspace{3cm}+\left( \ln\rho^{l-1}\tau-\ln\sqrt{2}\rho^l\tau\right)\\
&=&\frac{K|\ln\rho|}{\tau^2}+\sum_{k=0}^{\infty}2^{k+2}\frac 12\left( \ln (1+\rho^{2k}) -\ln (1+\rho^{2k+2})\right)
-\ln\sqrt{2}\rho\\
&\leq&\frac{K|\ln\rho|}{\tau^2}+2\sum_{k=0}^{\infty}2^k(\rho^{2k}-\rho^{2k+2})-\ln(\sqrt{2}\rho)\\
&=&\frac{K|\ln\rho|}{\tau^2}+2\frac{1-\rho^2}{1-2\rho^2}-\ln(\sqrt{2}\rho).
\end{eqnarray*}
Thus, the integral in the first line is bounded by a constant (say $L$) independent of $l$. Similarly one can show that the integral
$$L_0:=\int_{\tau}^{\ep_0}C_0^{\var}(F(\ep),B(x,a\ep))\frac{d\ep}{\ep}$$
is bounded.

To finish the proof, note that if
$$l>l(\delta):=\frac{|\ln\delta|}{|\ln\rho|}+1$$
then $\rho^{l-1}\tau<\delta$. Consequently, we can estimate the integral (uniformly in $x\in F$)
\begin{eqnarray*}
\lefteqn{\int_{\delta}^{\ep_0}C_0^{\var}(F(\ep),B(x,a\ep))\, \frac{d\ep}{\ep}}\\
&\leq&\sum_{1\leq l\leq l(\delta)} \int_{\rho^{l}\tau}^{\rho^{l-1}\tau}C_0^{\var}(F(\ep),B(x,a\ep))\, \frac{d\ep}{\ep}
+\int_{\sqrt{2}\tau}^{\ep_0}C_0^{\var}(F(\ep),B(x,a\ep))\, \frac{d\ep}{\ep}\\
&\leq&l(\delta)\cdot L + L_0.
\end{eqnarray*}
The last expression is of order $O(|\ln\delta|)$, as $l(\delta)$ is, and, thus, \eqref{*} holds.
\end{proof}

The second example will be the Menger sponge. In $\R^3$, we cannot use a bound analogous to Lemma~\ref{L*} (the local principal curvatures will be bounded again, but the curvature measures are given as integrals of products of more than one curvature and we loose the control over the sign). Instead, we shall use the following lemma given bounds for variations of curvature measures under reach and diameter restrictions. We formulate it in general dimension $d$, though we need it here for $d=3$ only.

\begin{lem} \label{L**}
Let $s>0$ be fixed. Then, there exists a constant $\eta$ such that $$C^{\var}_k(K,A)\leq \eta (s+\ep)^d\ep^{k-d}$$
whenever $k\in\{ 0,1,\ldots ,d-1\}$, $\ep>0$, $K$ is a compact subset of $\R^d$ with $\rea K\geq \ep$ and the set $A$ fulfills $|A|\leq 2s$.
\end{lem}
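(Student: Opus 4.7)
The plan is to reduce the claim to a reach-one, unit-ball local estimate by the homogeneity of $C_k$, handle the diameter constraint via a covering, and prove the local bound through the integral representation of curvature measures on the normal bundle.

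Setting $K':=\ep^{-1}K$ and $A':=\ep^{-1}A$, the scaling property \eqref{scaling} under the similarity $x\mapsto\ep^{-1}x$ yields $C_k^{\var}(K,A)=\ep^k\,C_k^{\var}(K',A')$; moreover $\rea K'\ge 1$ and $|A'|\le 2s/\ep=:2\sigma$. It therefore suffices to prove the existence of $\eta_0=\eta_0(d)$ with $C_k^{\var}(K',A')\le\eta_0(1+\sigma)^d$, since then $C_k^{\var}(K,A)\le\eta_0\ep^k(1+s/\ep)^d=\eta_0\ep^{k-d}(s+\ep)^d$. For this, I would cover the ball $B(x_0,\sigma)\supset A'$ (for some $x_0$) by $N\le c_1(d)(1+\sigma)^d$ closed unit balls $B(x_i,1)$, so that subadditivity and monotonicity of the variation give $C_k^{\var}(K',A')\le\sum_i C_k^{\var}(K',B(x_i,1))$. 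The lemma then reduces to the following universal local bound: there is $M=M(d)$ such that $C_k^{\var}(K',B(x,1))\le M$ whenever $\rea K'\ge 1$, $x\in\R^d$, and $k\in\{0,\ldots,d-1\}$.

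For this bound I would invoke the integral representation from \cite{Za86b},
$$C_k(K',B)=\frac{1}{O_{d-k-1}}\int_{\{(y,n)\in\nor K':\,y\in B\}}\mathbb{K}_{d-1-k}(y,n)\,\H^{d-1}(d(y,n)),$$
where $\mathbb{K}_{d-1-k}$ is an elementary symmetric expression in the normalized generalized principal curvatures $k_i/\sqrt{1+k_i^2}$ and $1/\sqrt{1+k_i^2}$. Each such factor is bounded by $1$ in absolute value, so $|\mathbb{K}_{d-1-k}|\le\binom{d-1}{k}$ pointwise, and the universal local bound reduces to proving $\H^{d-1}(\{(y,n)\in\nor K':\,y\in B(x,1)\})\le C(d)$.

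The main obstacle is this last estimate on the normal-bundle measure. The idea is to use the offset map $\Phi_t(y,n):=y+tn$, which is injective on $\nor K'$ for $t<\rea K'$, sends normal-bundle points with $y\in B(x,1)$ into $\partial K'(t)\cap B(x,3/2)$ for $t\le 1/2$, and has Jacobian $|J\Phi_t|=\prod_{i=1}^{d-1}|1+tk_i|/\sqrt{1+k_i^2}$. For individual $t$ the Jacobian can vanish (at any point where some $k_i=-1/t$), but the one-sided bound $k_i\le 1$ combined with a direct case analysis gives $\int_{1/4}^{1/2}|1+tk|/\sqrt{1+k^2}\,dt\ge c(d)>0$ uniformly in $k\le 1$, whence $\int_{1/4}^{1/2}|J\Phi_t|\,dt\ge c(d)^{d-1}$. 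On the other hand, the Kneser-type estimate \eqref{sa} applied to the truncation $K'\cap B(x,2)$ (whose $t$-parallel set agrees with $K'(t)$ on $B(x,3/2)$) yields $\H^{d-1}(\partial K'(t)\cap B(x,3/2))\le C_2(d)$ uniformly for $t\in[1/4,1/2]$. Integrating the area formula for $\Phi_t$ over $t\in[1/4,1/2]$ and combining the two estimates delivers the required bound on $\H^{d-1}(\{(y,n)\in\nor K':\,y\in B(x,1)\})$, completing the universal local bound and therefore the lemma.
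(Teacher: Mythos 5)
Your route is genuinely different from the paper's: the paper avoids the normal bundle entirely and instead applies the local Steiner formula $\sum_{i=1}^d\omega_i r^i C_{d-i}(K,A)=\H^d\bigl((K(r)\setminus K)\cap\Pi_K^{-1}(A)\bigr)$ for $0<r<\ep$, bounds the right-hand side by $\omega_d(s+\ep)^d$ via the isodiametric inequality, and then inverts a Vandermonde-type system (evaluating the polynomial at $t=1,\tfrac12,\dots,\tfrac1d$) to extract a bound on each $\ep^i|C_{d-i}(K,A)|$ separately. Your plan --- rescale to reach $1$, cover $A'$ by $O((1+\sigma)^d)$ unit balls, and bound the local mass of the unit normal bundle by pushing it forward under $\Phi_t(y,n)=y+tn$ onto $\partial K'(t)$ and controlling that surface area via \eqref{sa} --- is a standard and workable alternative, but the key step as you wrote it contains a genuine error.

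The problem is the lower bound on the Jacobian of $\Phi_t$. You invoke ``the one-sided bound $k_i\le 1$''; this is the wrong direction. For $\rea K'\ge 1$ the generalized principal curvatures satisfy $k_i\ge -1$ (they may well equal $+\infty$, e.g.\ at a vertex of a convex polytope); this is exactly the fact the paper uses in Lemma~\ref{L*} with the bound $-\ep^{-1}$. Under your (false) hypothesis $k_i\le 1$ the factor $|1+tk_i|/\sqrt{1+k_i^2}$ can vanish at $k_i=-1/t\in[-4,-2]$, which is presumably why you pass to $\int_{1/4}^{1/2}|1+tk|/\sqrt{1+k^2}\,dt\ge c>0$ and then conclude $\int_{1/4}^{1/2}|J\Phi_t|\,dt\ge c^{d-1}$. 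That last inference is invalid: a lower bound on each $\int f_i$ does not give a lower bound on $\int\prod_i f_i$ (the factors could be small on complementary parts of the interval), so as written the chain breaks. The fix is to use the correct inequality $k_i\ge -1$: then for $t\in[\tfrac14,\tfrac12]$ one checks the three cases $k_i\ge 0$ (ratio $\ge(1+tk_i)/(1+k_i)\ge t\ge\tfrac14$), $-1\le k_i<0$ (ratio $\ge(1-t)/\sqrt2\ge\tfrac1{2\sqrt2}$) and $k_i=+\infty$ (ratio $=t$), giving a \emph{pointwise} lower bound $J\Phi_t\ge(2\sqrt2)^{-(d-1)}$; no integration over $t$ is then needed and the rest of your argument (injectivity of $\Phi_t$ for $t<\rea K'$, the area formula, and the surface-area bound for the truncated parallel set) goes through. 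With that repair the proof is correct, though noticeably heavier than the paper's Steiner-formula argument, which needs no curvature representation at all.
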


\begin{proof}
We use the local Steiner formula (see, e.g., \cite{Za86b}):
$$\sum_{i=1}^d \omega_ir^iC_{d-i}(K,A)
= \H^d((K(\ep)\setminus K)\cap \Pi_K^{-1}(A)),\quad 0<r<\ep,$$
where $\Pi_K$ is the metric projection onto $K$ and $\omega_i$ is the volume of the unit ball in $\R^i$.
Since the set on the right hand side has diameter less than $2(s+\ep)$, its volume is between $0$ and $\omega_d(s+\ep)^d$ by the isodiametric inequality. Denote
$a_i:=\omega_i\ep^iC_{d-i}(K,A)$, $i=1,\ldots,d$.
Then we have
\begin{equation} \label{LP}
0\leq a_1t+a_2t^2+\cdots +a_dt^d\leq \omega_d(s+\ep)^d,\quad 0\leq t\leq 1.
\end{equation}
This is an infinite system of linear inequalities and it sufficies for us to consider only $d$ of them, say $t=1,\frac 12,\dots ,\frac 1d$. \eqref{LP} then takes the form
$$Ma\in [0,L]^d,$$
where $a=(a_1,\ldots ,a_d)$,
$$M=\left( \begin{array}{cccc}
           1&1&\cdots&1\\
           2^{-1}& 2^{-2}&\cdots& 2^{-d}\\
           \multicolumn{4}{c}\dotfill\\
           d^{-1}& d^{-2}&\cdots& d^{-d}
           \end{array}
\right)$$
and $L=\omega_d(s+\ep)^d$. Since $M$ is regular, we can transform the condition to
$$a\in M^{-1} [0,L]^d=L\cdot M^{-1}[0,1]^d,$$
which already implies the linear in $L$ bounds for $|a_i|$ and, hence, also for $\ep^i|C_{d-i}(K,A)|$, $i=1,\ldots ,d$. The assertion follows easily.
\end{proof}

\begin{examp}
The {\it Menger sponge} is the self-similar set $F$ in $\R^3$ with the similarities
$$S_{ijk}(x,y,z):=\left(\frac {x+i}3,\frac{y+j}3,\frac{z+k}3\right),\quad i,j,k=0,1,2,$$
where at most one of the triple of indices $i,j,k$ may be $1$. (Thus, the number of similarities is $20$.) The Hausdorff dimension is $D=\ln 20/\ln 3$. The fractal $F$ is connected and it fulfills the (SOSC) with $J=[0,1]^3$. A natural construction of $F$ starts with the unit cube $[0,1]^3$ and removes subsequenly $7$, ${20}\cdot 7$, ${20}^2\cdot 7,\ldots$ cubes of edge length $1/3,1/3^2,1/3^3\ldots$. We shall call the ${20}^{l-1}\cdot 7$ removed cubes of edge length $1/3^l$ {\it the removed cubes of $l$th generation}.

We claim again that the $C_0(F(\ep))$ is not locally bounded, but \eqref{*} holds for $k=0$, $1$ or $2$.
\end{examp}

\begin{proof}
Let $l\in\N$ be given and assume that $3^{-l}/2<\ep<3^{-l+1}/2$. Then, $F(\ep)$ is connected with topologically cylindrical holes through the removed cubes of generations $1,\ldots ,l-1$, and, eventually, further topologically spherical holes in the removed cubes of generation $l$. Let $F_0(\ep)$ be the set $F(\ep)$ with all topologically spherical holes filled up. An important observation is that $\rea\widetilde{F_0(\ep)}\geq\ep$. (Indeed, observe that the closest to $\widetilde{F_0(\ep)}$ points that do not have unique footpoints in $\widetilde{F_0(\ep)}$ lie on the edges of removed cubes which have distance $\ep$ from $\widetilde{F_0(\ep)}$.)

If $(3^{-l}/2)\sqrt{2}<\ep<3^{-l+1}/2$ then $F(\ep)=F_0(\ep)$ (there is no topologically spherical hole). Further, let $j\in\{ 0,1,\ldots\}$ and assume that
$$\frac 1{2\cdot 3^l}\sqrt{1+3^{-2j-2}}<\ep< \frac 1{2\cdot 3^l}\sqrt{1+3^{-2j}};$$
then, in each of the  $l$th generation removed cube, there are
$$1+2+2^2+\cdots +2^{j-1}=2^j-1$$
topologically spherical holes in $F^\ep$. Each of the topologically spherical holes (with boundary) has reach $\geq\ep$ and diameter less than $\ep$.

Fix some $a>1$, take an $x\in F$ and let $H_i$, $i=1,\ldots ,p$, be the (closed) topologically spherical holes in $\widetilde{F(\ep)}$ hit by $B(x,a\ep)$. Since $\ep<3^{-l+1}/2$, $B(x,a\ep)$ hits at most $(2a+1)^3$ removed cubes of $l$th generation, hence, $p\leq (2a+1)^3(2^j-1)$. Thus, for given $\ep$, we get, using Lemma~\ref{L**},
\begin{eqnarray*}
C_k^{\var}(F(\ep), B(x,a\ep))&=&C_k^{\var}(\widetilde{F(\ep)},B(x,a\ep))\\
&\leq& C_k^{\var}(\widetilde{F_0(\ep)},B(x,a\ep))+\sum_{i=1}^p C_k^{\var}(H_i)\\
&\leq& \eta \ep^{k-3}(a\ep+\ep)^3+(2a+1)^3(2^j-1)\eta\ep^{k-3}(2\ep)^3\\
&\leq&(A+B\cdot 2^j)\eta\ep^k
\end{eqnarray*}
with some constants $A,B$.
Consequently, we can estimate the integral
\begin{eqnarray*}
\int_{3^{-l}/2}^{3^{-l+1}/2} \ep^{-k}C_k^{\var}(F(\ep),B(x,a\ep))\, \frac{d\ep}{\ep}
&\leq&\sum_{j=0}^\infty \int_{\frac 1{2\cdot 3^l}\sqrt{1+3^{-2j-2}}}^{\frac 1{2\cdot 3^l}\sqrt{1+3^{-2j}}} (A+B\cdot 2^j)\eta\, \frac{d\ep}{\ep}+
\int_{(3^{-l}/2)\sqrt{1+3^{-2}}}^{3^{-l+1}/2} A\eta \,\frac{d\ep}{\ep}\\
&=&\eta\sum_{j=0}^\infty (A+B\cdot 2^j)\frac 12 \left( \ln(1+3^{-2j})-\ln(1+3^{-2j-2})\right)\\
&&+A\eta \left( \ln 3-\ln \sqrt{1+3^{-2}}\right)\\
&\leq& \eta\sum_{j=0}^\infty (A+B\cdot 2^j)\frac 12 (3^{-2j}-3^{-2j-2})+2\ln 3-\frac 12\ln {10}
\end{eqnarray*}
and it is not difficult to see that the last expression is bounded; let $L$ denote its value.
The integral
$$L_0:=\int_{1/6}^{\ep_0}\ep^{-k} C_k^{\var}(F(\ep),B(x,2\ep))\, \frac{d\ep}{\ep}$$
is, of course, bounded as well. The rest of the proof continues similarly as that of Example~\ref{CD}. We set
$$l(\delta):=\frac{\ln 3-\ln 2-\ln\delta}{\ln 3}+1$$
and note that $3^{-l+1}/2<\delta$ whenever $l>l(\delta)$. Thus,
\begin{eqnarray*}
\lefteqn{\int_{\delta}^{\ep_0}\ep^{-k}C_k^{\var}(F(\ep),B(x,a\ep))\, \frac{d\ep}{\ep}}\\
&\leq&\sum_{1\leq l\leq l(\delta)} \int_{3^{-l}/2}^{3^{-l+1}/2}\ep^{-k}C_k^{\var}(F(\ep),B(x,a\ep))\, \frac{d\ep}{\ep}
+\int_{1/6}^{\ep_0}\ep^{-k}C_k^{\var}(F(\ep),B(x,a\ep))\, \frac{d\ep}{\ep}\\
&\leq&l(\delta)\cdot L + L_0,
\end{eqnarray*}
which is of order $O(|\ln\delta|)$ as $\delta\to 0$, and, thus, \eqref{*} holds.

As in the first example, the set of critical values of the distance function is countable in this case (these are the end points of the integration domains used). It is not difficult to see that $C_0(F(\ep))$ is unbounded at any neighborhhood of the values $\ep=3^{-l}/2$ since new topologically spherical holes appear at each value $3^{-l}/2\sqrt{1+3^{-2k}}$, $k\in\N$, increasing the Euler characteristic. It follows as in Example~\ref{CD} that \eqref{bounded} does not hold.
\end{proof}

\end{document}